\newcommand{\mybold}[1]{\mathbf{\boldsymbol{#1}}}
\renewcommand{\vec}[1]{\underline{\mybold{#1}}}
\newcommand{\lvec}[1]{\accentset{\text{\tiny$\leftarrow$}}{#1}}
\newcommand{\rvec}[1]{\accentset{\text{\tiny$\rightarrow$}}{#1}}
\newcommand{\lrvec}[1]{\accentset{\text{\tiny$\leftrightarrow$}}{#1}}
\newcommand{\tvec}[1]{\rvec{\mybold{#1}}}
\newcommand{\tmat}[1]{\rvec{\mybold{#1}}}
\newcommand{\ivec}[1]{\lvec{\mybold{#1}}}
\newcommand{\imat}[1]{\lvec{\mybold{#1}}}
\newcommand{\itvec}[1]{\lrvec{\mybold{#1}}}
\newcommand{\xscal}[1]{\mybold{\mathsf{#1}}}
\newcommand{\xvec}[1]{\vec{\mathsf{#1}}}
\newcommand{\iter}[3][i]{\vec{#2}_{#1}^{(#3)}}
\newcommand{\itersc}[3][i]{#2_{#1}^{(#3)}}
\newcommand{\iterscal}[2]{\xscal{#1}^{(#2)}}
\newcommand{\itervec}[2]{\xvec{#1}^{(#2)}}
\newcommand{\dx}{\Delta x}
\newcommand{\dy}{\Delta y}
\newcommand{\dt}{\Delta t}
\newcommand{\dtau}{\Delta \tau}
\newcommand{\kron}{\otimes}
\newcommand{\had}{\circ}
\newcommand{\diag}[1]{\mathrm{diag}(#1)}
\newcommand{\vspan}[1]{\mathrm{span}(#1)}
\newtheorem{theorem}{Theorem}
\newtheorem{lemma}[theorem]{Lemma}
\newtheorem{definition}{Definition}
\newtheorem{assumption}{Assumption}
\newtheorem{remark}{Remark}
\title{Locally conservative and flux consistent iterative methods}
\author{Viktor Linders$^{\mbox{\tiny\rm 1}}$, Philipp Birken$^{\mbox{\tiny\rm 1}}$}
\date{}
\begin{document}
\maketitle
\baselineskip=0.9
\normalbaselineskip
\vspace{-3pt}
\begin{center}{\footnotesize\em $^{\mbox{\tiny\rm 1}}$Centre for
    mathematical sciences, Lund University, Lund, Sweden.\\ email: viktor.linders@math.lu.se \\ \qquad philipp.birken@na.lu.se \\ }
\end{center}

\begin{abstract}
Conservation and consistency are fundamental properties of discretizations of systems of hyperbolic conservation laws. Here, these concepts are extended to the realm of iterative methods by formally defining \emph{locally conservative} and \emph{flux consistent} iterations. These concepts are of both theoretical and practical importance: Based on recent work by the authors, it is shown that pseudo-time iterations using explicit Runge-Kutta methods are locally conservative but not necessarily flux consistent. An extension of the Lax-Wendroff theorem is presented, revealing convergence towards weak solutions of a temporally retarded system of conservation laws. Each equation is modified in the same way, namely by a particular scalar factor multiplying the spatial flux terms. A technique for enforcing flux consistency, and thereby recovering convergence, is presented. Further, local conservation is established for all Krylov subspace methods, with and without restarts, and for Newton's method under certain assumptions on the discretization. Thus it is shown that Newton-Krylov methods are locally conservative, although not necessarily flux consistent. Numerical experiments with the 2D compressible Euler equations corroborate the theoretical results. Further numerical investigations of the impact of flux consistency on Newton-Krylov methods indicate that its effect is case dependent, and diminishes as the number of iterations grow.
\end{abstract}

{\it \noindent Keywords: Iterative methods, Conservation laws, Lax-Wendroff theorem, Pseudo-time iterations, Newton-Krylov methods}


\section{Introduction}

Conservation laws arise ubiquitously in the modelling of physical phenomena and their discretizations remain the subject of intense research. Fundamental properties of successful schemes include conservation, consistency and convergence. These concepts are well defined for both space and time discretizations; explicit and implicit.

Implicit discretizations typically result in a large, sparse systems of nonlinear equations to be solved in each time step. The solution is usually approximated through the application of iterative methods; see e.g. \cite[Chapters 5 \& 6]{birken2021numerical}. Yet, discussions about conservation, consistency and convergence (in the sense of grid refinement) for schemes involving iterative methods are rare. In \cite{jespersen1983flux,barth1987analysis}, studys were conducted of particular implicit finite volume schemes applied to the steady Euler equations, solved using a variety of modified Newton-type methods. The results indicate that the choice of iterative method has a significant impact on the convergence of the scheme. Based on these results, a study of similar schemes applied to the Reynolds-Averaged Navier-Stokes (RANS) equations were carried out in \cite{junqueira2014study}, where it was found that the less performant methods violate mass conservation.

In \cite{birken2021conservative}, the authors considered general finite volume discretizations of 1D scalar conservation laws, discretized in time with the implicit Euler method. Global (i.e. mass) conservation was proven for many methods, including pseudo-time iterations, Krylov subspace methods, Newton's method and certain multigrid techniques. On the other hand, the Jacobi and Gauss-Seidel iterations were shown to violate mass conservation in general, corroborating the observations in \cite{junqueira2014study}. A stronger notion is that of {\it local conservation}, defined formally below, which loosely means that mass is not only conserved but also not teleported. This notion is important for physical correcteness, and allows to prove extensions of the Lax-Wendroff theorem \cite{lax1959systems}, thus giving a much stronger mathematical backing of such nonlinear schemes. In \cite{birken2021conservative}, a start was made in this vein for pseudo-time iterations. It was shown that in case of convergence, the resulting scheme converges to a solution of a conservation law, where the flux is multiplied by a scheme dependent factor, unless particular care is taken. We say that the iterative method lacks {\it flux consistence}, which manifests as a temporal retardation. 

Throughout this article, we work with systems of conservation laws. After introducing relevant notation and terminology, we formally define \emph{locally conservative} and \emph{flux consistent} iterative methods in \ref{sec:preliminaries}. We extend the results on pseudotime iterations from \cite{birken2021conservative} in \ref{sec:pseudo-time}, while considering a large class of implicit Runge-Kutta (RK) methods in place of Euler's method. As it turns out, even for systems flux inconsistency manifests through a scheme dependent scalar factor. Thus, a method to enforce flux consistency, first introduced in \cite{birken2021conservative}, applies here too. 

The second focus of this work is local conservation of the important class of Newton-Krylov methods. In \ref{sec:Newton}, we first prove that Newton's method is both locally conservative and flux consistent under certain assumptions on the spatial discretization and when solving all linear systems exactly. We can thus establish that if there are problems with conservation within an implicit solver using Newton's method, they stem from the iterative solver for the linear systems. Secondly, by relating Krylov subspace methods to pseudo-time iterations, local conservation is shown also for these in \ref{sec:krylov}. This subsequently leads to a proof of local conservation for Newton-Krylov methods. 

Numerical examples corroborate the theoretical findings in \ref{sec:numerical_experiments}. We further explore the impact of flux consistency on Newton-Krylov methods by applying the aforementioned technique for ensuring flux consistency of pseudo-time iterations. The results indicate that role of flux consistency is case dependent, and that its effect diminishes as the number of iterations grow.



\section{Preliminaries} \label{sec:preliminaries}

This section introduces relevant notation and the theoretical background upon which the remaining paper rests.

\subsection{Notation}

Scalar quantities are denoted by letters in normal font. Vectors and matrices are bold, with vectors being lower case and matrices upper case. Vectors and matrices of several different dimensions are treated in the manuscript:
\begin{itemize}
\item We consider systems of $m$ conservation laws. Vectors in $\mathbb{R}^m$ are written with an underline e.g. $\vec{u}$.
\item We consider $s$-stage explicit Runge-Kutta methods. Vectors in $\mathbb{R}^s$ are written with a right-pointing arrow, e.g. $\tvec{b}$, and similarly for matrices in $\mathbb{R}^{s \times s}$.
\item We consider $\tilde{s}$-stage implicit Runge-Kutta methods. Vectors in $\mathbb{R}^{\tilde{s}}$ are written with a left-pointing arrow e.g. $\ivec{b}$, and similarly for matrices in $\mathbb{R}^{\tilde{s} \times \tilde{s}}$.
\item Vectors whose dimension is the product of the dimensions listed above are written with a combination of attributes, e.g. $\itvec{x} \in \mathbb{R}^{\tilde{s}s}$, $\vec{\tvec{y}} \in \mathbb{R}^{sm}$, $\vec{\itvec{z}} \in \mathbb{R}^{\tilde{s}sm}$.
\item Vectors representing quantities on spatial grids are expressed in a bold sans serif font, e.g.
$$
\xscal{u}^\top = (\dots, u_{i-1},u_i,u_{i+1}, \dots).
$$
This notation is combined with the accents above if the evaluated quantity in question is a vector. Thus, a vector of $m$-element vectors is represented as
$$
\xvec{u}^\top = (\dots,\vec{u}_{i-1}^\top,\vec{u}_i^\top,\vec{u}_{i+1}^\top, \dots).
$$
Matrices operating on these vectors are denoted similarly with capital letters and are constructed block-diagonally:
$$
\xvec{A} = 
\text{blkdiag}(\dots, \vec{A}_{i-1}, \vec{A}_i, \vec{A}_{i+1}, \dots).
$$
\item A flux function that takes $(p+q+1)$ arguments, e.g. $\vec{f}_{i+\frac{1}{2}}(\vec{u}_{i-p}, \dots, \vec{u}_{i+q})$, is sometimes denoted with the abbreviated argument $\vec{f}_{i+\frac{1}{2}}(\xvec{u})$.
\end{itemize}


\subsection{Conservation laws and the Lax-Wendroff theorem}

Consider the system of 1D conservation laws
\begin{equation} \label{eq:conservation_law}
\vec{u}_t + \vec{f}_x = \vec{0}, \quad \vec{u}(x,0) = \vec{u}_0(x), \quad x \in \Omega, \quad t > 0.
\end{equation}
Here, $\vec{u}(x,t), \, \vec{f}, \, \vec{u}_0 \in \mathbb{R}^m$. Throughout, it is assumed that \eqref{eq:conservation_law} is posed either as a Cauchy problem or on a periodic domain. Under these circumstances, the quantity $\int_\Omega \vec{u} \text{d}x$ is conserved. A space-time discretization of \eqref{eq:conservation_law} that discretely mimics this property is said to be \emph{globally conservative}.

We consider discretizations of \eqref{eq:conservation_law} that may be expressed in the form
\begin{equation} \label{eq:FV}
\frac{\vec{u}_i^{n+1} - \vec{u}_i^n}{\dt} + \frac{1}{\dx} \left( \vec{f}_{i + \frac{1}{2}} - \vec{f}_{i - \frac{1}{2}} \right) = \vec{0}, \quad i = \dots, -1, 0, 1, \dots
\end{equation}
Here, $\vec{u}_i^n \approx \vec{u}(x_i,t_n) \equiv \vec{u}(i \dx, n \dt)$. In this paper we consider implicit discretizations and therefore restrict our attention to numerical fluxes of the type
$$
\vec{f}_{i + \frac{1}{2}} \equiv \vec{f}_{i + \frac{1}{2}}(\xvec{u}^{n+1}) = \vec{f}_{i + \frac{1}{2}}(\vec{u}_{i-p}^{n+1}, \dots, \vec{u}_{i+q}^{n+1}),
$$
where $p$ and $q$ are nonnegative integers with $p+q > 0$. Throughout, it is assumed that these numerical fluxes are consistent:

\begin{definition} \label{def:consistent_flux}
The numerical flux $\vec{f}_{i + \frac{1}{2}}(\xvec{u}) = \vec{f}_{i + \frac{1}{2}}(\vec{u}_{i-p}, \dots, \vec{u}_{i+q})$ is said to be consistent with $\vec{f}(\vec{u})$ if it it Lipschitz continuous in each argument and if $\vec{f}_{i + \frac{1}{2}}(\vec{u},\dots,\vec{u}) = \vec{f}(\vec{u})$.
\end{definition}

The concept of local conservation will be central in the remainder. It applies to explicit and implicit discretizations alike:

\begin{definition}
A discretization of \eqref{eq:conservation_law} that can be expressed in the form \eqref{eq:FV} is said to be locally conservative.
\end{definition}

Local conservation is a useful property for both physical and mathematical reasons \eqref{eq:conservation_law}. It enforces that solution components leaving one computational cell necessarily enter the neighbouring one. Conservation of "total mass", $\sum_i \Delta x \vec{u}_i^n$ is thereby ensured, in analogy with the continuous problem (i.e. global conservation). Further, it is an essential ingredient in the ubiquitous Lax-Wendroff theorem.

The Lax-Wendroff theorem applies to the Cauchy problem for \eqref{eq:conservation_law} and considers locally conservative discretizations with consistent numerical flux. If the numerical solution of such a scheme converges to a function $\vec{u}$ in the limit of vanishing $\dx$ and $\dt$, the theorem provides sufficient conditions for $\vec{u}$ to be a weak solution of the conservation law \eqref{eq:conservation_law} \cite[Chapter 12]{leveque1992numerical}. More precisely, consider a sequence of grids $(\dx_\ell, \dt_\ell)$ such that $\dx_\ell, \dt_\ell \rightarrow 0$ as $\ell \rightarrow \infty$. Let $\vec{\mathcal{U}}_\ell(x,t)$ denote the piecewise constant function that takes the solution value $\vec{u}_i^n$ in $(x_i, x_{i+1}] \times (t_{n-1}, t_n]$ on the $\ell$th grid. We make the following assumptions:

\begin{assumption} \label{assumptions} 
\hfill
\begin{enumerate}
    \item There is a function $\vec{u}(x,t)$ such that over every bounded set $\Omega = [a,b] \times [0,T]$ in $x$-$t$ space,
    $$
    \| \vec{\mathcal{U}}_\ell(x,t) - \vec{u}(x,t) \|_{1,\Omega} \rightarrow 0 \quad \text{as} \quad \ell \rightarrow \infty.
    $$
    \item For each $T \geq 0$ there is a constant $R > 0$ such that the total variation
    $$
    TV(\vec{\mathcal{U}}_\ell(\cdot,t)) < R \quad \text{for all} \quad 0 \leq t \leq T, \quad \ell = 1, 2, \dots
    $$
\end{enumerate}
\end{assumption}

The Lax-Wendroff theorem can then be stated as follows:

\begin{theorem} \label{thm:LW_original}
Consider a sequence of grids $(\dx_\ell, \dt_\ell)$ such that $\dx_\ell, \dt_\ell \rightarrow 0$ as $\ell \rightarrow \infty$. Suppose that the numerical flux $\vec{f}_{i \pm \frac{1}{2}}$ in \eqref{eq:FV} is consistent with $\vec{f}$ and that \ref{assumptions} is satisfied. Then, $\vec{u}(x,t)$ is a weak solution of \eqref{eq:conservation_law}.
\end{theorem}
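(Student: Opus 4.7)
The plan is to adapt the classical proof of the Lax-Wendroff theorem \cite[Chapter 12]{leveque1992numerical} to the present setting of systems with implicit numerical fluxes. The goal is to show that the limit $\vec{u}$ satisfies the weak form of \eqref{eq:conservation_law}, namely that
\[
\int_0^\infty \!\! \int_{-\infty}^{\infty} \bigl( \vec{\phi}_t^\top \vec{u} + \vec{\phi}_x^\top \vec{f}(\vec{u}) \bigr) \, dx \, dt + \int_{-\infty}^{\infty} \vec{\phi}(x,0)^\top \vec{u}_0(x) \, dx = 0
\]
for every test function $\vec{\phi} \in C_c^1(\mathbb{R} \times [0,\infty); \mathbb{R}^m)$.

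First, I would fix such a $\vec{\phi}$, set $\vec{\phi}_i^n = \vec{\phi}(x_i,t_n)$, multiply \eqref{eq:FV} by $\dx \dt \, (\vec{\phi}_i^n)^\top$, and sum over $i \in \mathbb{Z}$ and $n \geq 0$; compact support of $\vec{\phi}$ keeps the sums finite. Applying discrete summation by parts in both indices shifts the forward differences onto the test function, producing three pieces: a sum involving $(\vec{u}_i^n)^\top (\vec{\phi}_i^n - \vec{\phi}_i^{n-1})/\dt$ that approximates $-\int \!\! \int \vec{\phi}_t^\top \vec{u} \, dx\, dt$, an initial-value contribution $\sum_i \dx (\vec{u}_i^0)^\top \vec{\phi}_i^0$ that approximates $-\int \vec{\phi}(x,0)^\top \vec{u}_0(x)\, dx$, and a flux sum involving $\vec{f}_{i+\frac{1}{2}}(\xvec{u}^{n+1})^\top (\vec{\phi}_{i+1}^n - \vec{\phi}_i^n)/\dx$. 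The first two terms converge to their continuous counterparts by combining part~1 of Assumption~\ref{assumptions} with the $C^1$-smoothness of $\vec{\phi}$, since the difference quotients of $\vec{\phi}$ approach its derivatives uniformly on the support.

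The main obstacle is the flux sum, since $\vec{f}_{i+\frac{1}{2}}$ depends on several cell values rather than only on $\vec{u}_i^{n+1}$. The approach is to split
\[
\vec{f}_{i+\frac{1}{2}}(\xvec{u}^{n+1}) = \vec{f}(\vec{u}_i^{n+1}) + \bigl[ \vec{f}_{i+\frac{1}{2}}(\vec{u}_{i-p}^{n+1}, \dots, \vec{u}_{i+q}^{n+1}) - \vec{f}_{i+\frac{1}{2}}(\vec{u}_i^{n+1}, \dots, \vec{u}_i^{n+1}) \bigr],
\]
which is legitimate by the consistency identity of Definition~\ref{def:consistent_flux}. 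The first piece, combined with continuity of $\vec{f}$ and the $L^1$ convergence of $\vec{\mathcal{U}}_\ell$, produces the target integral $\int \!\! \int \vec{\phi}_x^\top \vec{f}(\vec{u}) \, dx\, dt$ in the limit. The bracketed remainder is bounded by $L \sum_{j=-p}^{q} |\vec{u}_{i+j}^{n+1} - \vec{u}_i^{n+1}|$ via Lipschitz continuity; summing in $i$ yields at most $L(p+q+1) \cdot TV(\vec{\mathcal{U}}_\ell(\cdot, t_{n+1}))$, so after multiplication by the factor $(\vec{\phi}_{i+1}^n - \vec{\phi}_i^n) = \mathcal{O}(\dx)$ and summation over $n \in [0, T/\dt]$, the contribution of this remainder is of order $\dx$ by part~2 of Assumption~\ref{assumptions}, and therefore vanishes as $\ell \to \infty$. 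Collecting the three limits yields the weak formulation, establishing that $\vec{u}$ is a weak solution of \eqref{eq:conservation_law}.
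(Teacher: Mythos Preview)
The paper does not actually prove Theorem~\ref{thm:LW_original}; it is stated as the classical Lax-Wendroff theorem and attributed to \cite{lax1959systems} and \cite[Chapter~12]{leveque1992numerical}, with only a brief remark afterward about how Assumption~\ref{assumptions} can be relaxed. Your proposal correctly reconstructs the standard argument from the cited source (LeVeque): multiply by a test function, sum, apply discrete summation by parts in both indices, split the numerical flux via the consistency identity, and use the uniform TV bound together with Lipschitz continuity to kill the remainder. That is precisely the approach the paper points the reader to, so there is nothing to compare against and your outline is faithful to it.
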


\ref{assumptions} is not strictly speaking necessary for the Lax-Wendroff theorem. Both conditions can be relaxed somewhat. Indeed, the original proof due to Lax and Wendroff instead assumes that $\vec{\mathcal{{U}}}_\ell$ converges boundedly almost everywhere to $\vec{u}$ \cite{lax1959systems}.

The discretization \eqref{eq:FV} is implicit, hence the solution generally must be approximated using iterative methods. Since we are interested in the convergence properties of schemes involving iterative methods, the concepts of local conservation and consistency must be extended to this setting. To this end, we propose the following definition:

\begin{definition} \label{def:conservative_iterative_method}
Suppose that the solution to \eqref{eq:FV} is approximated by a sequence of iterates $\iter{u}{k}, \, k=0,\dots,N$ and set $\vec{u}^{n+1} = \iter{u}{N}$. If there is a numerical flux function $\vec{h}_{i+\frac{1}{2}}^{(N)}$ such that the approximate numerical solution $\vec{u}^{n+1}$ satisfies
\begin{equation} \label{eq:conservative_form}
    \frac{\vec{u}_i^{n+1} - \vec{u}_i^n}{\dt} + \frac{1}{\dx} \left( \vec{h}_{i+\frac{1}{2}}^{(N)} - \vec{h}_{i-\frac{1}{2}}^{(N)} \right) = \vec{0}, \qquad i = \dots, -1, 0, 1, \dots
\end{equation}
then the iterative method is said to be locally conservative. If $\vec{h}_{i+\frac{1}{2}}^{(N)}$ further satisfies \ref{def:consistent_flux}, then the iterative method is said to be flux consistent.
\end{definition}


\section{Conservation and consistency of pseudo-time iterations} \label{sec:pseudo-time}

In order to approximate the solution of the nonlinear system \eqref{eq:FV} using pseudo-time iterations, we introduce a pseudo-time derivative,
\begin{equation*}
	\frac{\partial \vec{u}_i}{\partial \tau} + \vec{g}_i(\xvec{u}) = 0, \qquad \vec{u}_i(0) = \vec{u}_{0_i}, \qquad i = \dots, -1, 0, 1, \dots
\end{equation*}
where the nonlinear function $\vec{g}_i$ is given by
\begin{equation} \label{eq:g-fun}
    \vec{g}_i(\xvec{u}) = \frac{\vec{u}_i - \vec{u}_i^n}{\dt} + \frac{1}{\dx} \left( \vec{f}_{i+\frac{1}{2}}(\xvec{u}) - \vec{f}_{i-\frac{1}{2}}(\xvec{u}) \right).
\end{equation}
Several different methods are available for iterating in pseudo-time \cite{swanson2007convergence, birken2019preconditioned}. Herein, we use an $s$-stage explicit Runge-Kutta (ERK) method. Let $(\tmat{A},\tvec{b},\tvec{c})$ denote the coefficient matrix and vectors of the ERK method. We denote the $k$th pseudo-time iterate by $\iter{u}{k}$. The subsequent iterate $\iter{u}{k+1}$ is computed from $\iter{u}{k}$ as
\begin{equation} \label{eq:RK_step}
    \iter{u}{k+1} = \iter{u}{k} - \dtau_k \sum_{j=1}^s b_j \vec{g}_i \left( \xvec{U}_j^{(k)} \right), \qquad i = \dots, -1, 0, 1, \dots
\end{equation}
where the stage vectors $\xvec{U}_j^{(k)}$, $j = 1, \dots, s$ have elements
\begin{equation} \label{eq:RK_stages}
\vec{U}_{j_\iota}^{(k)} = \iter[\iota]{u}{k} - \dtau_k \sum_{l=1}^{j-1} a_{j,l} \vec{g}_\iota \left( \xvec{U}_l^{(k)} \right), \quad \iota = i-p, \dots, i+q.
\end{equation}
As previously, $p$ and $q$ determine the bandwidth of the finite volume stencil.


\subsection{Systems of conservation laws}

In \cite{birken2021conservative} the scalar version (i.e. $m=1$) of \eqref{eq:conservation_law} was considered. There, it was shown that the scheme \eqref{eq:RK_step}--\eqref{eq:RK_stages} preserves the local conservation of the space-time discretization. Further, an extension of the Lax-Wendroff theorem was provided that incorporates a fixed number $N$ of pseudo-time iterations.

Here, we present two theorems that generalize the results in \cite{birken2021conservative} to systems of conservation laws (i.e. $m \geq 1$). We define a step in physical time by setting $\vec{u}_i^{n+1} = \iter{u}{N}$. Throughout, $\iter{u}{0} = \vec{u}_i^n$ is chosen as initial guess.

Recall that the \emph{stability function} $\phi(z)$ of an RK method $(\tmat{A},\tvec{b},\tvec{c})$ is given by
\begin{equation} \label{eq:stability_function}
    \phi(z) = 1 + z \tvec{b}^\top (\tmat{I} - z \tmat{A})^{-1} \tvec{1},
\end{equation}
where $\tmat{I}$ is the $s \times s$ identity matrix and $\tvec{1} \in \mathbb{R}^s$ is the vector of all ones; see e.g. \cite[Chapter IV.3]{wanner1996solving}. The \emph{stability region} of the RK method is defined as the subset of the complex plane for which $|\phi(z)| < 1$.

The proofs of the following theorems are very similar to those presented for the scalar case in \cite{birken2021conservative}. The details are therefore omitted. The following is a generalization to systems of conservation laws, which also allows each pseudo-time step to be taken with different ERK methods.

\begin{theorem} \label{thm:local_conservation}
Choose the initial guess $\iter{u}{0} = \vec{u}_i^n$. Apply $N$ pseudo-time iterations to \eqref{eq:FV}, where the $k$th iteration is performed with an ERK method $(\tmat{A}_k,\tvec{b}_k,\tvec{c}_k)$ with stability function $\phi_k(z)$ and pseudo-time step $\dtau_k$. Let $\mu_k = \dtau_k/\dt$ for $k = 0, \dots, N-1$. The pseudo-time iterations are locally conservative with numerical flux
\begin{equation} \label{eq:H-flux}
    \vec{h}_{i+\frac{1}{2}}^{(N)} = \sum_{k=0}^{N-1} \left( \mu_k \tvec{b}_k^\top (\tmat{I} + \mu_k \tmat{A}_k)^{-1} \kron \vec{I} \right) \left( \prod_{l=k+1}^{N-1} \phi_l(-\mu_{l}) \right) \vec{\tvec{f}}_{i+\frac{1}{2}}^{(k)}.
\end{equation}
Here, $\vec{I}$ is the $m \times m$ identity matrix and
$$
\vec{\tvec{f}}_{i+\frac{1}{2}}^{(k)} = \left( \vec{f}^\top_{i+\frac{1}{2}}\left( \xvec{U}_1^{(k)} \right), \dots, \vec{f}^\top_{i+\frac{1}{2}} \left( \xvec{U}_s^{(k)} \right) \right)^\top \in \mathbb{R}^{sm}.
$$
The numerical flux is consistent with $c \vec{f}(u)$, where
\begin{equation} \label{eq:wave_speed}
    c \equiv c(\mu_0,\dots,\mu_{N-1}) = 1 - \prod_{l=0}^{N-1} \phi_l(-\mu_{l}).
\end{equation}
Thus, pseudo-time iterations are flux consistent if and only if $c=1$.
\end{theorem}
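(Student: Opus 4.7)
The plan is to reduce a single pseudo-time step to a conservative update and then iterate. The crucial observation is that $\vec{g}_i$ in \eqref{eq:g-fun} already splits into a ``relaxation'' term $(\vec{u}_i - \vec{u}_i^n)/\dt$ and a flux-divergence term. Introducing the increments $\vec{e}_i^{(k)} = \iter{u}{k} - \vec{u}_i^n$ and, for each stage, $\vec{E}_{j_i}^{(k)} = \vec{U}_{j_i}^{(k)} - \vec{u}_i^n$, and setting $\mu_k = \dtau_k/\dt$, the stage equations \eqref{eq:RK_stages} become a linear system in the $\vec{E}_{j_i}^{(k)}$'s with coefficient matrix $\tmat{I} + \mu_k \tmat{A}_k$. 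Since $\tmat{A}_k$ is strictly lower triangular, this matrix is unit lower triangular and invertible. Collecting the stages into $\vec{\tvec{E}}_i^{(k)} \in \mathbb{R}^{sm}$, I can solve explicitly so that $\vec{\tvec{E}}_i^{(k)}$ is a known linear combination of $\vec{e}_i^{(k)}$ and the local stage fluxes $\vec{\tvec{f}}_{i\pm\frac12}^{(k)}$.

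Substituting this representation of the stages into \eqref{eq:RK_step} and simplifying with the polynomial identity $\mu_k\tmat{A}_k(\tmat{I}+\mu_k\tmat{A}_k)^{-1} = \tmat{I} - (\tmat{I}+\mu_k\tmat{A}_k)^{-1}$ produces cancellations that yield the clean one-step recurrence
\[
\frac{\vec{e}_i^{(k+1)}}{\dt} = \phi_k(-\mu_k)\,\frac{\vec{e}_i^{(k)}}{\dt} - \frac{1}{\dx}\bigl(\vec{h}_{i+\frac12}^{(k)} - \vec{h}_{i-\frac12}^{(k)}\bigr), \quad \vec{h}_{i+\frac12}^{(k)} = \bigl(\mu_k\tvec{b}_k^\top(\tmat{I}+\mu_k\tmat{A}_k)^{-1}\kron\vec{I}\bigr)\vec{\tvec{f}}_{i+\frac12}^{(k)},
\]
where the appearance of $\phi_k(-\mu_k)$ on the decay term comes directly from the stability function formula \eqref{eq:stability_function}. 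Unrolling this scalar linear recurrence from $k=0$ to $k=N-1$ with the initial condition $\vec{e}_i^{(0)}=\vec{0}$ (guaranteed by $\iter{u}{0}=\vec{u}_i^n$) kills the decay term entirely and accumulates the single-step fluxes with weights $\prod_{l=k+1}^{N-1}\phi_l(-\mu_l)$, exactly reproducing \eqref{eq:H-flux}. Local conservation in the sense of \ref{def:conservative_iterative_method} follows.

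For flux consistency, I specialize to $\vec{u}_j^n \equiv \vec{u}$ for all $j$. Then every $\vec{g}_i$ vanishes by consistency of the underlying numerical flux, the iteration is stationary, and each stage flux satisfies $\vec{\tvec{f}}_{i+\frac12}^{(k)} = \tvec{1}\kron\vec{f}(\vec{u})$. Plugging into \eqref{eq:H-flux} and using the identity $\mu_k\tvec{b}_k^\top(\tmat{I}+\mu_k\tmat{A}_k)^{-1}\tvec{1} = 1 - \phi_k(-\mu_k)$ gives a telescoping sum $\sum_{k=0}^{N-1}(P_{k+1}-P_k)\vec{f}(\vec{u})$ with $P_k = \prod_{l=k}^{N-1}\phi_l(-\mu_l)$ and $P_N=1$, which collapses to $(1 - \prod_{l=0}^{N-1}\phi_l(-\mu_l))\vec{f}(\vec{u}) = c\vec{f}(\vec{u})$. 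Lipschitz continuity of $\vec{h}_{i+\frac12}^{(N)}$ in each component of $\xvec{u}^n$ is inherited from that of $\vec{f}_{i\pm\frac12}$ through the polynomial dependence of the iterates and stages on $\xvec{u}^n$, so the equivalence ``flux consistent iff $c=1$'' is immediate.

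The main obstacle is the algebraic bookkeeping of the one-step reduction: specifically, verifying that the bare $\tvec{b}_k^\top$ contribution in \eqref{eq:RK_step} cancels against the factor introduced by substituting $\vec{\tvec{E}}_i^{(k)}$ into the update, leaving precisely $\mu_k\tvec{b}_k^\top(\tmat{I}+\mu_k\tmat{A}_k)^{-1}$ on the flux and $\phi_k(-\mu_k)$ on the decay. Once this local cancellation is pinned down, the $N$-step unrolling and the telescoping consistency check are essentially automatic, and the generalization from the scalar case of \cite{birken2021conservative} amounts to carrying along an $\kron\vec{I}$ factor throughout.
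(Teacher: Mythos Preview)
Your proposal is correct and follows essentially the same approach as the paper's proof, which simply defers to \cite[Lemma 3 \& Theorem 2]{birken2021conservative} and notes that the only changes needed are replacing $\tmat{A}$ by $\tmat{A}\kron\vec{I}$ and scalar multiplications $\tvec{1}\phi$ by Kronecker products $\tvec{1}\kron\vec{\phi}$. What you have written is in effect a faithful reconstruction of that cited scalar argument, carried out with the Kronecker factor in place, and your final paragraph explicitly acknowledges this.
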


\begin{remark}
The product in \eqref{eq:H-flux} is empty when $k = N-1$. To handle this case we use the convention
$$
\prod_{l=N}^{N-1} \phi_l(-\mu_{l}) = 1.
$$
\end{remark}

\begin{proof}
The proof is step by step the same as those of \cite[Lemma 3 \& Theorem 2]{birken2021conservative}. The only changes necessary are to replace $\tmat{A}$ by $\tmat{A} \kron \vec{I}$, and to swap each multiplication of the form $\tmat{1} \phi$, where $\phi$ is a scalar, to a corresponding Kronecker product $\tmat{1} \kron \vec{\phi}$.
\end{proof}

\ref{thm:local_conservation} reveals that pseudo-time iterations are locally conservative but not necessarily flux consistent, as characterized by the scalar $c$ in \eqref{eq:wave_speed}. To remove the inconsistency, it suffices to make a single iteration with any explicit RK method for which $\phi_l(-\mu_l) = 0$. In \cite{birken2021conservative} it was suggested to iterate once with the explicit Euler method, choosing $\dtau_0 = \dt$ so that $\mu_0 = 1$. The stability function is $\phi(-\mu_0) = 1-\mu_0$, hence $\phi(-1) = 0$. This rids the iterative method of its flux inconsistency and the remaining iterations can be made with any other ERK method as preferred. Numerical experiments in \cite{birken2021conservative} showed that enforcing flux consistency can have a profound impact on the convergence of the pseudo-time iterations.

The coefficient $c$ causes the numerical fluxes in \eqref{eq:H-flux} to be consistent with the modified system of conservation laws
\begin{equation} \label{eq:pseudo_conservation_law}
    \vec{u}_t + c(\mu_0,\dots,\mu_{N-1}) \vec{f}_x = 0.
\end{equation}
Note that $c$ affects all components of the system identically. An interpretation of its presence is that the pseudo-time iterations alter the rate of flow of time. Defining the modified time $t_c = ct$ it follows that $\vec{u}_t = c \vec{u}_{t_c}$ so that $\vec{u}_{t_c} + \vec{f}_x = \vec{0}$. Hence, $t_c$ rather than $t$ is the governing time variable in \eqref{eq:pseudo_conservation_law}.

Convergent pseudo-time iterations require that the pseudo-time steps are restricted to the stability domain of the explicit RK method, at least for most of the iterations. Thus, the product in \eqref{eq:wave_speed} is generally taken over factors bounded by unity, and consequently $c \leq 1$. Thus, $t_c$ represents a time retardation, i.e. time flows slower in \eqref{eq:pseudo_conservation_law} than in the original conservation law \eqref{eq:conservation_law}. This is in line with the experimental observations made in \cite{birken2021conservative}.


\subsection{Higher order implicit Runge-Kutta methods}

Let us return to the system of conservation laws \eqref{eq:conservation_law} and introduce a spatial semi-discretization
\begin{equation} \label{eq:semi-discretization}
	\vec{u}_t + \frac{1}{\dx} \left( \vec{f}_{i+\frac{1}{2}}(\xvec{u}) - \vec{f}_{i-\frac{1}{2}}(\xvec{u}) \right) = \vec{0}.
\end{equation}
If the implicit Euler method is used to discretize \eqref{eq:semi-discretization} in time, then \eqref{eq:FV} is recovered. However, suppose that we instead wish to discretize in time using an $\tilde{s}$-stage implicit Runge-Kutta (IRK) method with Butcher matrix and vectors $(\imat{A},\ivec{b},\ivec{c})$. The resulting scheme can be expressed as
\begin{equation} \label{eq:IRK}
\begin{aligned}
	\vec{U}_i^{(j)} &= \vec{u}_i^n - \frac{\dt}{\dx} \sum_{l=1}^{\tilde{s}} \lvec{a}_{jl} \left( \vec{f}_{i+\frac{1}{2}}(\xvec{U}^{(j)}) - \vec{f}_{i-\frac{1}{2}}(\xvec{U}^{(j)}) \right), \quad j = 1, \dots, \tilde{s}, \\
	\vec{u}_i^{n+1} &= \vec{u}_i^n - \frac{\dt}{\dx} \sum_{j=1}^{\tilde{s}} \lvec{b}_j \left( \vec{f}_{i+\frac{1}{2}}(\xvec{U}^{(j)}) - \vec{f}_{i-\frac{1}{2}}(\xvec{U}^{(j)}) \right).
\end{aligned}
\end{equation}
Define the quantities
\begin{align*}
	\vec{\ivec{U}}_i &= \left( \vec{U}_i^{(1),\top}, \dots, \vec{U}_i^{(\tilde{s}),\top} \right)^\top \in \mathbb{R}^{\tilde{s}m}, \\
	\vec{\ivec{f}}_{i+\frac{1}{2}}(\ivec{\xvec{U}}) &= \left( \vec{f}_{i+\frac{1}{2}}^\top(\xvec{U}^{(1)}), \dots, \vec{f}_{i+\frac{1}{2}}^\top(\xvec{U}^{(\tilde{s})}) \right)^\top \in \mathbb{R}^{\tilde{s}m}.
\end{align*}
Then, \eqref{eq:IRK} can be reformulated as
\begin{equation} \label{eq:IRK_system}
\begin{aligned}
	\vec{\ivec{U}}_i &= \ivec{1} \kron \vec{u}_i^n - \frac{\dt}{\dx} \left( \imat{A} \kron \vec{I} \right) \left( \vec{\ivec{f}}_{i+\frac{1}{2}}(\ivec{\xvec{U}}) - \vec{\ivec{f}}_{i-\frac{1}{2}}(\ivec{\xvec{U}}) \right), \\
	\vec{u}_i^{n+1} &= \vec{u}_i^n - \frac{\dt}{\dx} \left( \ivec{b}^\top \kron \vec{I} \right) \left( \vec{\ivec{f}}_{i+\frac{1}{2}}(\ivec{\xvec{U}}) - \vec{\ivec{f}}_{i-\frac{1}{2}}(\ivec{\xvec{U}}) \right),
\end{aligned}
\end{equation}
where $\ivec{1}$ is the vector of ones in $\mathbb{R}^{\tilde{s}}$.

We seek an approximation of the solution $\vec{u}_i^{n+1}$. To this end we must find an approximate solution of the nonlinear equation system in the second line of \eqref{eq:IRK_system}. Rewriting this equation as
\begin{equation} \label{eq:IRK_conservative_stages}
	\frac{\vec{\ivec{U}}_i - \ivec{1} \kron \vec{u}_i^n}{\dt} + \frac{1}{\dx} \left( \imat{A} \kron \vec{I} \right) \left( \vec{\ivec{f}}_{i+\frac{1}{2}}(\ivec{\xvec{U}}) - \vec{\ivec{f}}_{i-\frac{1}{2}}(\ivec{\xvec{U}}) \right) = \vec{\ivec{0}},
\end{equation}
we see that it is of precisely the same form as the discretization \eqref{eq:FV}, although with $\vec{\ivec{U}}_i$ taking the place of $\vec{u}_i^{n+1}$, $\lvec{1} \kron \vec{u}_i^n$ replacing $\vec{u}_i^n$ and $(\imat{A} \kron \vec{I}) \vec{\ivec{f}}_{i+\frac{1}{2}}$ in place of $\vec{f}_{i+\frac{1}{2}}$. Thus, if pseudo-time iterations are used to approximate a solution, then \ref{thm:local_conservation} applies and we can immediately conclude that the resulting scheme can be written in the conservative form
\begin{equation} \label{eq:IRK_stage_flux_form}
	\frac{\vec{\ivec{U}}_i - \ivec{1} \kron \vec{u}_i^n}{\dt} + \frac{1}{\dx} \left( \vec{\ivec{h}}_{i+\frac{1}{2}}^{(N)} - \vec{\ivec{h}}_{i-\frac{1}{2}}^{(N)} \right) = \vec{\ivec{0}},
\end{equation}
where the numerical flux is given by
$$
\vec{\ivec{h}}_{i+\frac{1}{2}}^{(N)} = \sum_{k=0}^{N-1} \left( \mu_k \tvec{b}^\top (\tmat{I} + \mu_k \tmat{A}) \kron \ivec{A} \kron \vec{I} \right) \left( \prod_{l=k+1}^{N-1} \phi_l(\mu_l) \right) \vec{\lrvec{f}}_{i+\frac{1}{2}}^{(k)},
$$
and
$$
\vec{\lrvec{f}}_{i+\frac{1}{2}}^{(k)} = \left( \vec{\lvec{f}}^\top \left( \xvec{\lvec{U}}_1^{(k)} \right), \dots, \vec{\lvec{f}}^\top \left( \xvec{\lvec{U}}_s^{(k)} \right) \right)^\top.
$$
At this point we note that if the scheme is such that a vector $\ivec{v}$ exists, satisfying
\begin{equation} \label{eq:v_conditions}
	\ivec{v}^\top \ivec{A} = \ivec{b}^\top, \qquad \ivec{v}^\top \ivec{1} = 1,
\end{equation}
then the second line in \eqref{eq:IRK_system} can be evaluated by left-multiplying the first line by $\ivec{v}^\top \kron \vec{I}$. In other words, it follows that
\begin{equation} \label{eq:IRK_step_definition}
	\vec{u}_i^{n+1} = \left( \ivec{v}^\top \kron \vec{I} \right) \vec{\ivec{U}}_i.
\end{equation}
If we adopt this principle and use it to compute $\vec{u}_i^{n+1}$ based on the approximation of $\vec{\ivec{U}}_i$ obtained by applying pseudo-time iterations to \eqref{eq:IRK_conservative_stages}, then the resulting scheme is conservative:

\begin{theorem} \label{thm:Runge-Kutta}
Apply $N$ pseudo-time iterations to the stage equations \eqref{eq:IRK_conservative_stages} with the same assumptions as in \ref{thm:local_conservation}. Suppose that a vector $\ivec{v}$ exists that satisfies conditions \eqref{eq:v_conditions} and compute $\vec{u}_i^{n+1}$ using \eqref{eq:IRK_step_definition}. Then the pseudo-time iterations are locally conservative with the numerical flux
\begin{equation} \label{eq:IRK_numerical_flux}
	\vec{h}_{i+\frac{1}{2}}^{(N)} = \sum_{k=0}^{N-1} \left( \mu_k \tvec{b} (\tmat{I} + \mu_k \tmat{A}) \kron \ivec{b}^\top \kron \vec{I} \right) \left( \prod_{l=k+1}^{N-1} \phi(\mu_l) \right) \vec{\lrvec{f}}_{i+\frac{1}{2}}^{(k)}.
\end{equation}
The numerical flux is consistent with $c(\mu_0,\dots,\mu_{N-1}) \vec{f}$. Thus, the pseudo-time iterations are flux consistent if and only if $c=1$.
\end{theorem}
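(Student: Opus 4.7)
The plan is to reduce this to \ref{thm:local_conservation} by recognizing that the stage equation \eqref{eq:IRK_conservative_stages} has exactly the same structural form as the base discretization \eqref{eq:FV}, modulo relabelling: the role of $\vec{u}_i^{n+1}$ is played by $\vec{\ivec{U}}_i$, the initial data $\vec{u}_i^n$ by $\ivec{1} \kron \vec{u}_i^n$, and the numerical flux $\vec{f}_{i+\frac{1}{2}}$ by the stage-aggregated flux $\bar{\vec{\ivec{f}}}_{i+\frac{1}{2}}(\ivec{\xvec{U}}) := (\ivec{A} \kron \vec{I}) \vec{\ivec{f}}_{i+\frac{1}{2}}(\ivec{\xvec{U}})$. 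Since this flux is still Lipschitz continuous in each argument, \ref{thm:local_conservation} applies verbatim and produces the stage-level conservative form \eqref{eq:IRK_stage_flux_form} with flux $\vec{\ivec{h}}_{i+\frac{1}{2}}^{(N)}$, consistent with $c$ times $\bar{\vec{\ivec{f}}}$.

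Next I would left-multiply \eqref{eq:IRK_stage_flux_form} by $\ivec{v}^\top \kron \vec{I}$. Using $\ivec{v}^\top \ivec{1} = 1$ together with \eqref{eq:IRK_step_definition}, the temporal difference collapses to $(\vec{u}_i^{n+1} - \vec{u}_i^n)/\dt$. The spatial term is a flux difference with flux $\vec{h}_{i+\frac{1}{2}}^{(N)} := (\ivec{v}^\top \kron \vec{I}) \vec{\ivec{h}}_{i+\frac{1}{2}}^{(N)}$, which already establishes local conservation in the sense of \ref{def:conservative_iterative_method}. To obtain the explicit expression \eqref{eq:IRK_numerical_flux}, I would commute $\ivec{v}^\top \kron \vec{I}$ through the Kronecker-structured operators in $\vec{\ivec{h}}_{i+\frac{1}{2}}^{(N)}$; the only nontrivial action is on the factor $\ivec{A}$ sitting between the ERK coefficients and the stage fluxes, where the identity $\ivec{v}^\top \ivec{A} = \ivec{b}^\top$ converts it into the claimed $\ivec{b}^\top$.

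For flux consistency, I would evaluate $\vec{h}_{i+\frac{1}{2}}^{(N)}$ on a constant state $\vec{u}$. By \ref{thm:local_conservation} applied at the stage level, $\vec{\ivec{h}}_{i+\frac{1}{2}}^{(N)}$ evaluated on a constant equals $c$ times $\bar{\vec{\ivec{f}}}$ on that constant, which by consistency of $\vec{f}_{i+\frac{1}{2}}$ reduces to $c\,(\ivec{A}\ivec{1}) \kron \vec{f}(\vec{u})$. Applying $\ivec{v}^\top \kron \vec{I}$ gives $c\,(\ivec{v}^\top \ivec{A} \ivec{1})\,\vec{f}(\vec{u}) = c\,(\ivec{b}^\top \ivec{1})\,\vec{f}(\vec{u}) = c\,\vec{f}(\vec{u})$, where the final step uses the first-order consistency condition $\ivec{b}^\top \ivec{1} = 1$ of the IRK method. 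Lipschitz continuity of $\vec{h}_{i+\frac{1}{2}}^{(N)}$ in each argument is inherited from $\vec{\ivec{h}}_{i+\frac{1}{2}}^{(N)}$ since $\ivec{v}^\top \kron \vec{I}$ is a bounded linear map, so $\vec{h}_{i+\frac{1}{2}}^{(N)}$ is consistent with $c\vec{f}$ in the sense of \ref{def:consistent_flux}.

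The main obstacle is not analytical but notational: tracking the three distinct tensor levels (ERK stages of dimension $s$, IRK stages of dimension $\tilde s$, and system components of dimension $m$) and ensuring that Kronecker factors are commuted in the correct order when $\ivec{v}^\top \kron \vec{I}$ is pushed inside $\vec{\ivec{h}}_{i+\frac{1}{2}}^{(N)}$. Once the bookkeeping identity $(\ivec{v}^\top \kron \vec{I})(\ivec{A} \kron \vec{I}) = (\ivec{b}^\top \kron \vec{I})$ is isolated, the remainder of the argument is a direct substitution and the omitted details mirror the scalar proof referenced from \cite{birken2021conservative}.
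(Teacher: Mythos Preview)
Your proposal is correct and follows essentially the same approach as the paper: apply \ref{thm:local_conservation} at the stage level to obtain \eqref{eq:IRK_stage_flux_form}, then left-multiply by $\ivec{v}^\top \kron \vec{I}$, using $\ivec{v}^\top \ivec{1} = 1$ to collapse the time difference and $\ivec{v}^\top \imat{A} = \ivec{b}^\top$ to convert the stage-level flux into \eqref{eq:IRK_numerical_flux}, with consistency following from $\ivec{b}^\top \ivec{1} = 1$. Your write-up is in fact more explicit than the paper's very terse proof, but the underlying argument is identical.
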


\begin{proof}
Local conservation with the flux \eqref{eq:IRK_numerical_flux} follows from left-multiplying \eqref{eq:IRK_conservative_stages} by $\left( \ivec{v}^\top \kron \vec{I} \right)$ and using \eqref{eq:v_conditions} to conclude that $\left( \ivec{v}^\top \kron \vec{I} \right)(\ivec{1} \kron \vec{u}_i^n) = \vec{u}_i^n$. Consistency with $c \vec{f}$ follows from the fact that $\ivec{b}^\top \ivec{1} = 1$ for every consistent RK method.
\end{proof}

It should be noted that we cannot find a vector $\ivec{v}$ that satisfies conditions \eqref{eq:v_conditions} for all IRK methods. For example, the implicit midpoint rule is given by $(\ivec{A},\ivec{b},\ivec{c}) = (1,1/2,1/2)$. Thus, the first condition in \eqref{eq:v_conditions} gives $\ivec{v} = 1/2$ whereas the second one gives $\ivec{v} = 1$, both of which cannot be satisfied. However, we remark that the important class of IRK methods associated with Summation-By-Parts (SBP) methods all have such a $\ivec{v}$ by construction \cite{boom2015high,linders2020properties}. This is a broad class of methods with the ability to preserve $L^2$-type estimates of the solution to systems of differential equations \cite{nordstrom2013summation} and encompass the ubiquitous Radau IA and IIA and Lobatto IIIC methods \cite{ranocha2019some} as special cases. For further details about the theoretical and practical aspects of SBP methods for time marching, see \cite{versbach2022theoretical} and the references therein.

A generalization of the Lax-Wendroff theorem can be obtained if we restrict ourselves to considering a fixed number of iterations on a sequence of ever finer grids:

\begin{theorem} \label{thm:LW}
Consider a sequence of grids $(\dx_\ell, \dt_\ell)$ such that $\dx_\ell, \dt_\ell \rightarrow 0$ as $\ell \rightarrow \infty$. Fix $N$ independently of $\ell$, set $\iter{u}{0} = \vec{u}_i^n$. Apply $N$ pseudo-time iterations to the conservative discretization \eqref{eq:FV}, or to the stage equations \eqref{eq:RK_stages} followed by the RK step \eqref{eq:RK_step}. Let $\dtau_{k,\ell} / \dt_\ell = \mu_{k,\ell} = \mu_k$ be constants independent of $\ell$ for each $k=0,\dots,N-1$. Suppose that the numerical flux $\vec{f}_{i \pm \frac{1}{2}}$ in \eqref{eq:FV} is consistent with $\vec{f}$ and that \ref{assumptions} is satisfied. Then, $\vec{u}(x,t)$ is a weak solution of the conservation law \eqref{eq:pseudo_conservation_law}.
\end{theorem}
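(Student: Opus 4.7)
The plan is to reduce this to the classical Lax--Wendroff theorem (\ref{thm:LW_original}) applied to the modified conservation law \eqref{eq:pseudo_conservation_law}. By \ref{thm:local_conservation} (or \ref{thm:Runge-Kutta} in the IRK case), after $N$ pseudo-time iterations the approximate solution satisfies a locally conservative discretization of the form \eqref{eq:conservative_form} with numerical flux $\vec{h}_{i+\frac{1}{2}}^{(N)}$, and this flux is consistent with $c(\mu_0,\dots,\mu_{N-1}) \vec{f}$ in the sense of \ref{def:consistent_flux}. Since by hypothesis $N$ and each $\mu_k$ are fixed independently of $\ell$, the constant $c$ is also independent of $\ell$. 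Thus the scheme defining $\vec{u}_i^{n+1}$ on the $\ell$th grid is a locally conservative, consistent discretization (in the classical sense) of the modified conservation law \eqref{eq:pseudo_conservation_law}.

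The one nontrivial point to verify is Lipschitz continuity of $\vec{h}_{i+\frac{1}{2}}^{(N)}$ in each of its arguments, which is part of the consistency requirement in \ref{def:consistent_flux} and is needed for the classical theorem. First I would show by induction on the stage index $j$ that each stage vector $\vec{U}_{j_\iota}^{(k)}$, viewed through \eqref{eq:RK_stages}--\eqref{eq:g-fun}, depends Lipschitz continuously on the iterate $\iter{u}{k}$ and on the data $\vec{u}_i^n$: the base case $j=1$ is immediate since $\vec{U}_{1_\iota}^{(k)} = \iter[\iota]{u}{k}$, and the inductive step uses the assumed Lipschitz continuity of each $\vec{f}_{i \pm \frac{1}{2}}$ (Definition of consistent flux) together with the fact that $\vec{g}_\iota$ depends on the stages through finitely many compositions of Lipschitz maps. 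A second induction on the iteration index $k$ then gives Lipschitz continuity of $\iter{u}{k}$, and hence of every $\vec{\tvec{f}}_{i+\frac{1}{2}}^{(k)}$, in $\vec{u}^n$. Since $\vec{h}_{i+\frac{1}{2}}^{(N)}$ from \eqref{eq:H-flux} is a fixed finite linear combination of these quantities, it inherits Lipschitz continuity in each of its arguments.

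Having established that the combined scheme is locally conservative with a flux consistent (in the sense of \ref{def:consistent_flux}) with $c\vec{f}$, and given that \ref{assumptions} holds for the piecewise constant interpolants $\vec{\mathcal{U}}_\ell$ of the post-iteration values, the hypotheses of \ref{thm:LW_original} are met with $\vec{f}$ replaced by $c\vec{f}$. Applying that theorem yields that the limit $\vec{u}(x,t)$ is a weak solution of $\vec{u}_t + c\vec{f}_x = \vec{0}$, which is precisely \eqref{eq:pseudo_conservation_law}.

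The main obstacle is the Lipschitz-continuity verification for the composite flux $\vec{h}_{i+\frac{1}{2}}^{(N)}$; everything else is bookkeeping once \ref{thm:local_conservation}/\ref{thm:Runge-Kutta} are invoked. The argument is essentially a structural induction over the ERK stages and the $N$ outer iterations, and crucially uses that $N$ is held fixed as $\ell \to \infty$, so the number of Lipschitz compositions (and hence the resulting Lipschitz constants) does not blow up under grid refinement.
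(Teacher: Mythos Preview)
Your proposal is correct and follows the same approach as the paper: reduce to the classical Lax--Wendroff theorem (\ref{thm:LW_original}) by invoking \ref{thm:local_conservation}/\ref{thm:Runge-Kutta} to write the iterated scheme in conservative form with a flux consistent with $c\vec{f}$, then apply \ref{thm:LW_original} to the modified law \eqref{eq:pseudo_conservation_law}. The paper's own proof is a one-line deferral to \cite[Theorem~3]{birken2021conservative} (with the Kronecker-product modifications already noted in the proof of \ref{thm:local_conservation}); your version spells out the Lipschitz verification for $\vec{h}_{i+\frac{1}{2}}^{(N)}$ via the double induction on stages and iterations, which is a welcome addition and exactly the argument that underlies the cited result.
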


\begin{proof}
The proof is identical to that of \cite[Theorem 3]{birken2021conservative}, with the same changes as those in the proof of \ref{thm:local_conservation}.
\end{proof}


\section{Newton's method} \label{sec:Newton}

We now return to the nonlinear system \eqref{eq:FV}, or equivalently to the implicit Runge-Kutta stage equations in \eqref{eq:IRK_system} and consider Newton's method, which replaces the nonlinear system with a sequence of linear ones. The solution to each linear system can then be approximated using pseudo-time iterations, or more commonly,using a Krylov subspace method.

In this section we limit our attention to the case when the numerical flux is bivariate, i.e. when $\vec{f}_{i+\frac{1}{2}} = \hat{\vec{f}}(\vec{u}_i,\vec{u}_{i+1})$ for some function $\hat{\vec{f}}(\vec{\theta},\vec{\phi})$. It will be convenient to split the flux into a symmetric (or \emph{convective}) and an anti-symmetric (or \emph{dissipative}) component;
$$
\hat{\vec{f}} = \hat{\vec{f}}^{(+)} + \hat{\vec{f}}^{(-)}, \qquad \hat{\vec{f}}^{(+)}(\vec{\theta},\vec{\phi}) = \hat{\vec{f}}^{(+)}(\vec{\phi},\vec{\theta}), \qquad \hat{\vec{f}}^{(-)}(\vec{\theta},\vec{\phi}) = -\hat{\vec{f}}^{(-)}(\vec{\phi},\vec{\theta}).
$$
Any bivariate function can be expressed in this way by setting
$$
\hat{\vec{f}}^{(+)}(\vec{\theta},\vec{\phi}) = \frac{1}{2} (\hat{\vec{f}}(\vec{\theta},\vec{\phi}) + \hat{\vec{f}}(\vec{\phi},\vec{\theta})), \qquad \hat{\vec{f}}^{(-)}(\vec{\theta},\vec{\phi}) = \frac{1}{2} (\hat{\vec{f}}(\vec{\theta},\vec{\phi}) - \hat{\vec{f}}(\vec{\phi},\vec{\theta})).
$$
By anti-symmetry, the dissipative component satisfies $\hat{\vec{f}}^{(-)}(\vec{u},\vec{u}) = \vec{0}$. Consistency of the numerical flux is therefore equivalent to consistency of the convective component.

In the following subsections, we demonstrate that Newton's method is locally conservative and flux consistent when bivariate fluxes are used. To simplify the presentation, we begin by proving these results for scalar conservation laws. The extension to systems is straightforward but notationally complicated. We therefore postpone this to a separate subsection.


\subsection{Scalar conservation laws}

Consider the scalar conservation law
\begin{equation*}
    u_t + f_x = 0,
\end{equation*}
posed on a periodic spatial domain and adjoined with appropriate initial data. Let $\hat{f}^{(\pm)}(\theta,\phi)$ be a consistent bivariate numerical flux. Without loss of generality we may assume that the flux is either symmetric or anti-symmetric. The analysis will proceed in the same way in both cases and we may thereafter form linear combinations of such fluxes as we like. We discretize with a finite volume method,
\begin{equation} \label{eq:FV_discretization}
    \frac{u_i^{n+1} - u_i^n}{\dt} + \frac{\hat{f}^{(\pm)}(u_i^{n+1}, u_{i+1}^{n+1}) - \hat{f}^{(\pm)}(u_{i-1}^{n+1}, u_i^{n+1})}{\dx} = 0.
\end{equation}
The discretization \eqref{eq:FV_discretization} may equivalently be expressed in vector form as
\begin{equation} \label{eq:FV_vector_form}
    \frac{\xscal{u}^{n+1} - \xscal{u}^n}{\dt} + \frac{1}{\dx} (\xscal{Q}^{(\mp)} \had \xscal{F}^{(\pm)}(\xscal{u}^{n+1})) \xscal{1} = \xscal{0}.
\end{equation}
The dimensions of the vectors and matrices match the number of cells in the computational grid. Here, $\had$ denotes the Hadamard product. The elements of the matrix $\xscal{F}^{(\pm)}(\xscal{u})$ are given by $(\xscal{F}^{(\pm)})_{ij} = \hat{f}^{(\pm)}(u_i,u_j)$ and $\xscal{Q}^{(\mp)}$ is given by
$$
\xscal{Q}^{(\mp)} =
\begin{bmatrix}
0 & 1 & & \dots & \mp 1 \\
\mp 1 & 0 & 1 & & \\
& \mp 1 & 0 & 1 & \\
\vdots & & & \ddots &  1\\
1 & & 0 & \mp 1 & 0
\end{bmatrix}.
$$

We define the function $\xscal{g}(\xscal{v})$ as
\begin{equation} \label{eq:Newton_vector}
    \xscal{g}(\xscal{v}) := \frac{\xscal{v} - \xscal{u}^n}{\dt} + \frac{1}{\dx} (\xscal{Q}^{(\mp)} \had \xscal{F}^{(\pm)}(\xscal{v})) \xscal{1}.
\end{equation}
Newton's method applied to the nonlinear system \eqref{eq:FV_vector_form} is then given by
\begin{equation} \label{eq:Newtons_method}
    \xscal{g}'(\iterscal{v}{k}) \Delta \xscal{v} + \xscal{g}(\iterscal{v}{k}) = \xscal{0}, \qquad \iterscal{v}{k+1} = \iterscal{v}{k} + \Delta \xscal{v},
\end{equation}
where $\xscal{g}'$ is the Jacobian of $\xscal{g}$. An explicit expression for $\xscal{g}'$ is given in \cite[Theorems 2.1 \& 5.1]{chan2022efficient}. Let $\hat{f}^{(\pm)}_\phi = \partial \hat{f}^{(\pm)} / \partial \phi$ and introduce the matrix $\xscal{F}^{(\pm)}_\phi(\xscal{v})$ with elements $(\xscal{F}^{(\pm)}_\phi)_{ij} = \hat{f}^{(\pm)}_\phi(v_i,v_j)$. Defining the matrix
$$
\xscal{\partial} \xscal{F}^{(\pm)}(\xvec{v}) := \xscal{Q}^{(\mp)} \had \xscal{F}^{(\pm)}_\phi(\xscal{v}) - \diag{\xscal{1}^\top (\xscal{Q}^{(\mp)} \had \xscal{F}^{(\pm)}_\phi(\xscal{v}))},
$$
the Jacobian is given by $\xscal{g}'(\iterscal{v}{k}) = \dt^{-1} \xscal{I} + \dx^{-1} \xscal{\partial} \xscal{F}^{(\pm)}(\iterscal{v}{k})$. This is a tridiagonal matrix, which we may explicitly write as
\begin{align*}
&\xscal{g}'(\iterscal{v}{k}) = \\
& \text{tri} \left( \mp \frac{\hat{f}^{(\pm)}_\phi(\itersc{v}{k},\itersc[i-1]{v}{k})}{\dx}, \frac{1}{\dt} - \frac{\hat{f}^{(\pm)}_\phi(\itersc[i-1]{v}{k}, \itersc{v}{k}) \mp \hat{f}^{(\pm)}_\phi(\itersc[i+1]{v}{k}, \itersc{v}{k})}{\dx}, \frac{\hat{f}^{(\pm)}_\phi(\itersc{v}{k},\itersc[i+1]{v}{k})}{\dx} \right)
\end{align*}
%
%
%
Inserting this expression for $\xscal{g}'(\iterscal{v}{k})$ into \eqref{eq:Newtons_method} and collecting terms leads to a system of equations of the form
\begin{equation} \label{eq:Newton_locally_conservative}
    \frac{\itersc{v}{k+1} - u_i^n}{\dt} + \frac{1}{\dx} \left( h_{i+\frac{1}{2}}^{(k+1)} - h_{i-\frac{1}{2}}^{(k+1)} \right) = 0, \qquad i = \dots, -1, 0, 1, \dots,
\end{equation}
where the numerical flux function $h_{i+\frac{1}{2}}^{(k+1)}$ is given by
\begin{equation} \label{eq:Newton_flux}
    h_{i+\frac{1}{2}}^{(k+1)} = \hat{f}_\phi^{(\pm)}(\itersc{v}{k}, \itersc[i+1]{v}{k}) \Delta v_{i+1} \pm \hat{f}_\phi^{(\pm)}(\itersc[i+1]{v}{k}, \itersc{v}{k}) \Delta v_i + \hat{f}^{(\pm)}(\itersc[i+1]{v}{k}, \itersc{v}{k}).
\end{equation}
%

\begin{theorem} \label{thm:scalar_Newton}
Choose $\itersc{v}{0} = u_i^n$ and suppose that $\xscal{g}'(u \xscal{1})$ is nonsingular for any non-zero scalar $u$. Then Newton's method, applied to the discretization \eqref{eq:FV_vector_form}, is locally conservative and flux consistent.
\end{theorem}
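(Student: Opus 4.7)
The plan is to handle local conservation and flux consistency separately; the former is nearly a reading of the paragraph preceding the theorem, while the latter is the real content and follows from the choice of initial guess together with the nonsingularity hypothesis.

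For local conservation, nothing beyond what is already displayed is needed. Inserting the tridiagonal entries of $\xscal{g}'(\iterscal{v}{k})$ into the Newton step $\xscal{g}'(\iterscal{v}{k})\Delta\xscal{v} = -\xscal{g}(\iterscal{v}{k})$ and regrouping the coefficients multiplying $\Delta v_{i-1}$, $\Delta v_i$ and $\Delta v_{i+1}$ (together with the terms originating from $\xscal{g}(\iterscal{v}{k})$ itself) produces \eqref{eq:Newton_locally_conservative} with the flux \eqref{eq:Newton_flux}. Choosing $k+1=N$ and invoking \ref{def:conservative_iterative_method} directly yields local conservation.

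For flux consistency, I would argue by induction on the Newton count that a uniform state is a fixed point of the iteration. Suppose that the entire spatial stencil of $h_{i+\frac{1}{2}}^{(N)}$ is occupied by a single constant value $u$. By the choice $\iterscal{v}{0}=\xscal{u}^n$ we then have $\iterscal{v}{0}=u\xscal{1}$ on that stencil. Consistency of $\hat{f}^{(\pm)}$ makes the flux differences in \eqref{eq:Newton_vector} telescope to zero at the constant state, so $\xscal{g}(u\xscal{1}) = \xscal{0}$. The hypothesis that $\xscal{g}'(u\xscal{1})$ is nonsingular then forces $\Delta\xscal{v}=\xscal{0}$ in \eqref{eq:Newtons_method}, giving $\iterscal{v}{1}=u\xscal{1}$. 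Induction propagates $\iterscal{v}{k}=u\xscal{1}$ and zero increments at every step. Substituting into \eqref{eq:Newton_flux} collapses the formula to $h_{i+\frac{1}{2}}^{(N)} = \hat{f}^{(\pm)}(u,u) = f^{(\pm)}(u)$, which is exactly \ref{def:consistent_flux}. Since an arbitrary bivariate flux admits the split $\hat{f}=\hat{f}^{(+)}+\hat{f}^{(-)}$ and since \eqref{eq:Newton_flux} is linear in $\hat{f}^{(\pm)}$ and $\hat{f}^{(\pm)}_\phi$, the symmetric and anti-symmetric cases superpose to give the general bivariate statement.

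I do not anticipate any real obstacle; the nonsingularity assumption on $\xscal{g}'(u\xscal{1})$ is calibrated precisely to kill the Newton increment at the constant equilibrium, which is the only mechanism by which a spurious, iteration-dependent offset could enter $h_{i+\frac{1}{2}}^{(N)}$. The one implicit consideration worth flagging is that the flux \eqref{eq:Newton_flux} a priori depends on the iterates rather than on $\xscal{u}^n$ directly, but because $\iterscal{v}{0}=\xscal{u}^n$ and each successive iterate is a deterministic function of its predecessor through the Jacobian inverse, $h_{i+\frac{1}{2}}^{(N)}$ is indeed a bona fide numerical flux in the data $\xscal{u}^n$, to which \ref{def:consistent_flux} applies without ambiguity.
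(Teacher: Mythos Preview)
Your approach is essentially the same as the paper's: derive \eqref{eq:Newton_locally_conservative}--\eqref{eq:Newton_flux} for local conservation, then show that a constant state $u\xscal{1}$ is a fixed point of the Newton map (using $\xscal{g}(u\xscal{1})=\xscal{0}$ and the nonsingularity of $\xscal{g}'(u\xscal{1})$) so that $\Delta\xscal{v}=\xscal{0}$ and the flux reduces to $\hat{f}^{(\pm)}(u,u)$; finally superpose symmetric and anti-symmetric parts. Your explicit induction over the Newton iterations is a slightly more careful phrasing of what the paper does in one line.

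There is one omission worth flagging. \ref{def:consistent_flux} has two clauses: the constant-state identity \emph{and} Lipschitz continuity in each argument. You verify only the former. The paper closes this by observing that $h_{i+\frac{1}{2}}^{(k+1)}$ depends on the new iterate $\iterscal{v}{k+1}$ solely through $\Delta\xscal{v}$, and linearly so; hence the flux is (locally) Lipschitz in that argument. You should add this remark, since without it the appeal to \ref{def:consistent_flux} is incomplete.
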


\begin{proof}
Setting $u_i^{n+1} = \itersc{v}{k+1}$ in \eqref{eq:Newton_locally_conservative} shows that Newton's method applied to \eqref{eq:FV_vector_form} is locally conservative. To establish flux consistency, let $\iterscal{v}{k}$ and $\xscal{u}^n$ both be given as $u \xscal{1}$ for some nonzero scalar $u$ and some $k$ (e.g. $k=0$). Observe that, by the consistency of $\hat{f}^{(\pm)}$, we have $(\xscal{F}^{(+)}(u \xscal{1}))_{ij} = f(u)$ and $(\xscal{F}^{(-)}(u \xscal{1}))_{ij} = 0$. Consequently, $\xscal{Q}^{(+)} \had \xscal{F}^{(-)}(u\xscal{1}) = \xscal{0}$ and $\xscal{Q}^{(-)} \had \xscal{F}^{(+)}(u\xscal{1}) = f(u) \xscal{Q}^{(-)}$. Since $\xscal{Q}^{(-)} \xscal{1} = \xscal{0}$ it therefore follows from \eqref{eq:Newton_vector} that $\xscal{g}(u \xscal{1}) = \xscal{0}$. Since $\xscal{g}'(u \xscal{1})$ is nonsingular by assumption, \eqref{eq:Newtons_method} implies that $\Delta \xscal{v} = \xscal{0}$. Thus, from \eqref{eq:Newton_flux} we have
$$
h_{i+\frac{1}{2}}^{(k+1)} = \hat{f}^{(\pm)}_\phi(u,u) \cdot 0 \pm \hat{f}^{(\pm)}_\phi(u,u) \cdot 0 + \hat{f}^{(\pm)}(u,u) = \hat{f}^{(\pm)}(u,u),
$$
the latter of which is consistent by assumption. Note that $h_{i+\frac{1}{2}}^{(k+1)}$ exclusively depends on the current iterate $\itersc[\,]{v}{k+1}$ via $\Delta v$ and is therefore a linear function. The numerical flux is thus Lipschitz continuous, hence consistent.

Finally, since any bivariate function can be written as a linear combination of symmetric and anti-symmetric components, the argument above extends by linearity to arbitrary bivariate numerical fluxes.
\end{proof}

\subsection{Systems of conservation laws}

We now return to the system of conservation laws \eqref{eq:conservation_law} and the discretization \eqref{eq:FV}, which may correspond either to the implicit Euler method or to the stage equations of a higher order RK method. The analysis proceeds very similarly to the scalar case, although we are now dealing with a flux function $\vec{f}:\mathbb{R}^m \mapsto \mathbb{R}^m$. Suppose that the corresponding numerical flux is of the form
$$
\hat{\vec{f}}^{(\pm)}(\vec{\theta},\vec{\phi}) = (\hat{f}^{(\pm)}_1(\vec{\theta},\vec{\phi}), \dots, \hat{f}^{(\pm)}_m(\vec{\theta},\vec{\phi}))^\top,
$$
where each $\hat{f}^{(\pm)}_\ell$, $\ell = 1, \dots, m$, is bivariate, consistent and either symmetric or anti-symmetric.

This time, Newton's method is given by
\begin{equation} \label{eq:Newtons_method_system}
    \xvec{g}'(\itervec{v}{k}) \Delta \xvec{v} + \xvec{g}(\itervec{v}{k}) = \xvec{0}, \qquad \itervec{v}{k+1} = \itervec{v}{k} + \Delta \xvec{v},
\end{equation}
where we define $\xvec{g}(\xvec{v})$ as
\begin{equation} \label{eq:Newton_vector_system}
\begin{aligned}
    \xvec{g}(\xvec{v}) :&= \frac{\xvec{v} - \xvec{u}^n}{\dt} + \frac{1}{\dx} ((\vec{I} \kron \xscal{Q}^{(\mp)}) \had \xvec{F}^{(\pm)}(\xvec{v})) \xvec{1} \\
    &= \frac{\xvec{v} - \xvec{u}^n}{\dt} + \frac{1}{\dx}
    \begin{bmatrix}
    (\xscal{Q}^{(\mp)} \had \xscal{F}_1) \xscal{1} \\ \vdots \\ (\xscal{Q}^{(\mp)} \had \xscal{F}_m) \xscal{1}
    \end{bmatrix}.
\end{aligned}
\end{equation}
Here we have introduced the matrix $\xvec{F}^{(\pm)} = \xscal{F}^{(\pm)}_1 \oplus \dots \oplus \xscal{F}^{(\pm)}_m$ with $(\xscal{F}^{(\pm)}_\ell)_{ij} = \hat{f}^{(\pm)}_\ell(\vec{v}_i, \vec{v}_j)$. We denote the partial derivatives of the numerical fluxes as $\hat{f}^{(\pm)}_{\ell, \phi_\kappa} = \partial \hat{f}^{(\pm)}_\ell / \partial \phi_\kappa$ and define the matrices $\xscal{F}^{(\pm)}_{\ell, \phi_{\kappa}}(\xvec{v})$ with elements $(\xscal{F}^{(\pm)}_{\ell, \phi_{\kappa}})_{ij} = \hat{f}^{(\pm)}_{\ell, \phi_\kappa}(\vec{v}_i,\vec{v}_j)$. Finally we obtain
$$
\xscal{\partial} \xscal{F}^{(\pm)}_{\ell, \phi_{\kappa}}(\xvec{v}) := \xscal{Q}^{(\mp)} \had \xscal{F}^{(\pm)}_{\ell, \phi_{\kappa}}(\xvec{v}) - \diag{ \xscal{1}^\top \left( \xscal{Q}^{(\mp)} \had \xscal{F}^{(\pm)}_{\ell, \phi_{\kappa}}(\xvec{v}) \right) }.
$$
In \cite{chan2022efficient}, the Jacobian $\xvec{g}'(\itervec{v}{k})$ is shown to be given by
$$
\xvec{g}'(\itervec{v}{k}) = \frac{\xvec{I}}{\dt} + \frac{1}{\dx}
\begin{bmatrix}
\xscal{\partial} \xscal{F}^{(\pm)}_{1, \phi_{1}}(\itervec{v}{k}) & \dots & \xscal{\partial} \xscal{F}^{(\pm)}_{1, \phi_{m}}(\itervec{v}{k}) \\
\vdots & \ddots & \vdots \\
\xscal{\partial} \xscal{F}^{(\pm)}_{m, \phi_{1}}(\itervec{v}{k}) & \dots & \xscal{\partial} \xscal{F}^{(\pm)}_{m, \phi_{m}}(\itervec{v}{k})
\end{bmatrix}.
$$
This is a block matrix formed by $m^2$ submatrices, each tridiagonal as in the scalar case (although the off-diagonal blocks do not depend on $\dt$). If we write $\Delta \xvec{v} = (\Delta \xscal{v}_1^\top, \dots, \Delta \xscal{v}_m^\top)^\top$ and collect terms, then the Newton iteration  \eqref{eq:Newtons_method_system} takes the form
\begin{align*}
\frac{\itervec{v}{k+1} - \xvec{u}^n}{\dt} &+ \frac{1}{\dx}
\begin{bmatrix}
\xscal{\partial} \xscal{F}^{(\pm)}_{1, \phi_{1}}(\itervec{v}{k}) & \dots & \xscal{\partial} \xscal{F}^{(\pm)}_{1, \phi_{m}}(\itervec{v}{k}) \\
\vdots & \ddots & \vdots \\
\xscal{\partial} \xscal{F}^{(\pm)}_{m, \phi_{1}}(\itervec{v}{k}) & \dots & \xscal{\partial} \xscal{F}^{(\pm)}_{m, \phi_{m}}(\itervec{v}{k})
\end{bmatrix}
\begin{bmatrix}
\Delta \xscal{v}_1 \\ \vdots \\ \Delta \xscal{v}_m
\end{bmatrix}
\\ &+ \frac{1}{\dx}
\begin{bmatrix}
(\xscal{Q}^{(\mp)} \had \xscal{F}^{(\pm)}_1(\itervec{v}{k})) \xscal{1} \\ \vdots \\ (\xscal{Q}^{(\mp)} \had \xscal{F}^{(\pm)}_m(\itervec{v}{k})) \xscal{1}
\end{bmatrix}
= \xvec{0}.
\end{align*}
Looking at the $i$th row of the $\ell$th block in this linear system we find that the scheme once again may be expressed on the locally conservative form
\begin{equation} \label{eq:Newton_locally_conservative_system}
    \frac{\itersc[\ell_i]{v}{k+1} - u_{\ell_i}^n}{\dt} + \frac{1}{\dx} \left( h_{\ell_i+\frac{1}{2}}^{(k+1)} - h_{\ell_i-\frac{1}{2}}^{(k+1)} \right) = 0, \qquad i = \dots, -1, 0, 1, \dots,
\end{equation}
where $\ell = 1, \dots, m$. The numerical flux is given by
\begin{equation*} \label{eq:Newton_flux_system}
    h_{\ell_i+\frac{1}{2}}^{(k+1)} = \sum_{\kappa=1}^m \left( \hat{f}^{(\pm)}_{\ell,\phi_\kappa}(\iter[\kappa_i]{v}{k}, \iter[\kappa_{i+1}]{v}{k}) \Delta v_{\kappa_{i+1}} \pm \hat{f}^{(\pm)}_{\ell,\phi_\kappa}(\iter[\kappa_{i+1}]{v}{k}, \iter[\kappa_i]{v}{k}) \Delta v_{\kappa_i} \right) + \hat{f}^{(\pm)}_\ell(\iter[\ell_{i+1}]{v}{k}, \iter[\ell_i]{v}{k}).
\end{equation*}
Further, if $\xvec{g}'(\vec{u} \kron \xscal{1})$ is nonsingular for any vector $\vec{u}$ with non-zero elements, then by the same argument as in the scalar case, the numerical flux is consistent. We may thus conclude:

\begin{theorem} \label{thm:system_Newton}
Choose $\itersc[\ell_i]{v}{0} = u_{\ell_i}^n$ for each $i$ and $\ell = 1, \dots, m$ and suppose that $\xvec{g}'(\vec{u} \kron \xscal{1})$ is nonsingular for any vector $\vec{u}$ with non-zero elements. Then Newton's method, applied to the discretization \eqref{eq:Newton_vector_system}, is locally conservative and flux consistent.
\end{theorem}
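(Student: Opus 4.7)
The plan is to mimic the proof of \ref{thm:scalar_Newton} and leverage the block structure already exposed in the derivation of \eqref{eq:Newton_locally_conservative_system}. Local conservation is essentially free: setting $\vec{u}_i^{n+1} = \itervec{v}{k+1}$ in \eqref{eq:Newton_locally_conservative_system} shows directly that the $k{+}1$st Newton iterate satisfies the conservative form required by \ref{def:conservative_iterative_method}, with numerical flux given component-wise by $h_{\ell_i+\frac{1}{2}}^{(k+1)}$. Hence no new work is needed for this half of the statement.

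For flux consistency, I would proceed by induction on $k$, with the base case anchored by the assumption $\itervec{v}{0} = \xvec{u}^n = \vec{u} \kron \xscal{1}$ for some $\vec{u} \in \mathbb{R}^m$. The key identity to establish is $\xvec{g}(\vec{u} \kron \xscal{1}) = \xvec{0}$. To verify this I would inspect \eqref{eq:Newton_vector_system} block by block: consistency of each $\hat{f}^{(\pm)}_\ell$ gives $(\xscal{F}^{(+)}_\ell(\vec{u} \kron \xscal{1}))_{ij} = f_\ell(\vec{u})$ and $(\xscal{F}^{(-)}_\ell(\vec{u} \kron \xscal{1}))_{ij} = 0$. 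Consequently, for symmetric components the Hadamard product becomes $f_\ell(\vec{u}) \xscal{Q}^{(-)}$, which annihilates $\xscal{1}$ by zero row sums, while for anti-symmetric components $\xscal{Q}^{(+)} \had \xscal{F}^{(-)}_\ell = \xscal{0}$ outright. Either way each block vanishes, so $\xvec{g}(\vec{u} \kron \xscal{1}) = \xvec{0}$.

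Combining this with the nonsingularity hypothesis on $\xvec{g}'(\vec{u} \kron \xscal{1})$, the Newton equation \eqref{eq:Newtons_method_system} forces $\Delta \xvec{v} = \xvec{0}$. Substituting $\Delta v_{\kappa_i} = 0$ into the flux formula preceding \eqref{eq:Newton_flux_system}\textendash that is, the displayed expression for $h_{\ell_i+\frac{1}{2}}^{(k+1)}$\textendash collapses the derivative terms and leaves $h_{\ell_i+\frac{1}{2}}^{(k+1)} = \hat{f}^{(\pm)}_\ell(\vec{u},\vec{u})$, which equals $f_\ell(\vec{u})$ in the symmetric case and $0$ in the anti-symmetric case (matching the value of $f_\ell$ since the dissipative component of $\vec{f}$ vanishes on the diagonal). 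Lipschitz continuity follows because $h_{\ell_i+\frac{1}{2}}^{(k+1)}$ depends affinely on $\Delta \xvec{v}$ and smoothly on $\itervec{v}{k}$ through the partial derivatives $\hat{f}^{(\pm)}_{\ell,\phi_\kappa}$. A final linearity argument extends the conclusion from purely symmetric or anti-symmetric fluxes to an arbitrary bivariate flux via the canonical splitting introduced at the start of \ref{sec:Newton}.

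The main obstacle I anticipate is purely notational: one must keep track of the two layers of indexing (the component index $\ell$ over the $m$ equations and the spatial index $i$ over the grid) while verifying that the identities $\xscal{Q}^{(-)}\xscal{1} = \xscal{0}$ and $\xscal{Q}^{(+)} \had \xscal{F}^{(-)}_\ell = \xscal{0}$ apply block by block. Beyond this bookkeeping the argument is a direct transcription of the scalar case, so no new structural ideas are required.
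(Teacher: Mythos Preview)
Your proposal is correct and follows essentially the same route as the paper, which in fact does not write out a separate proof of \ref{thm:system_Newton} but simply states that the numerical flux is consistent ``by the same argument as in the scalar case.'' Your plan is precisely that argument, carried through block by block with the added (and harmless) explicitness of phrasing the verification as an induction on $k$.
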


Motivated by the conservation and flux consistency of Newton's method we conjecture that under conditions similar to \ref{assumptions}, the numerical approximation converges to a weak solution of the original conservation law \eqref{eq:conservation_law}. In \ref{sec:numerical_experiments} we present numerical results supporting this conjecture.

The solution $\Delta \vec{v}$ to the linear systems arising within Newton's method will in practice be approximated using a second iterative method. If we for this purpose consider pseudo-time iterations, then by the conservation and flux consistency of Newton's method, \ref{thm:local_conservation} applies. The only change necessary is to choose the initial guess $\Delta \itervec{u}{0}$ as the null-vector. We summarize these observations in the following:

\begin{theorem} \label{thm:inexact_Newton_pseudo}
Apply $K$ iterations of Newton's method to the implicit scheme \eqref{eq:FV} with initial guess $\itervec{v}{0} = \xvec{u}^n$. Approximate the solution to the linear system in the $j$th Newton iteration using $N_j$ pseudo-time iterations with ERK methods and initial guess $\Delta \itervec{u}{0} = \xvec{0}$. The Newton-pseudo-time iterations are locally conservative with numerical fluxes consistent with $c \vec{f}$, where
\begin{equation} \label{eq:wave_speed_Newton}
    c = 1 - \prod_{l=0}^{\sum_j^K N_j-1} \phi_l(-\mu_{l}).
\end{equation}
\end{theorem}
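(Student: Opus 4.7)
The plan is to recognise that each outer Newton step, approximated by $N_{j+1}$ inner pseudo-time iterations with zero initial guess, is structurally equivalent to pseudo-time iteration on a linearized discretization of the form $\tilde{\vec{g}}_j(\vec{v}) := (\vec{v} - \vec{u}^n)/\dt + \tilde{\vec{f}}_j(\vec{v})/\dx = \vec{0}$, where $\tilde{\vec{f}}_j$ is the numerical flux linearized around $\itervec{v}{j}$. This equivalence is the crux of the argument: the $1/\dt$ coefficient that governs the scalar stability-function arguments $-\mu_l$ in \ref{thm:local_conservation} is preserved under Newton linearization, and by \ref{thm:system_Newton} the linearized flux $\tilde{\vec{f}}_j$ remains consistent with $\vec{f}$ at uniform states.

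I would then proceed by induction on the outer Newton index $j$, with inductive hypothesis that $\itervec{v}{j}$ is locally conservative relative to $\xvec{u}^n$ with numerical flux consistent with $c_j \vec{f}$, where
$$c_j = 1 - \prod_{l=0}^{(\sum_{j'=1}^{j} N_{j'})-1} \phi_l(-\mu_l).$$
The base case $j=0$ holds since $\itervec{v}{0} = \xvec{u}^n$ corresponds to the trivially consistent zero flux, giving $c_0 = 0$. For the inductive step, the bookkeeping in \ref{thm:local_conservation} would be extended to accommodate a non-trivial initial iterate: applying $N_{j+1}$ pseudo-time iterations of $\tilde{\vec{g}}_{j+1}$ starting from $\itervec{v}{j}$ yields $\itervec{v}{j+1}$ whose flux decomposes into a term retaining the previous contribution scaled by $P_{j+1} := \prod_l \phi_l(-\mu_l)$ over the current block of $N_{j+1}$ inner iterations, plus a new contribution consistent with $(1-P_{j+1})\vec{f}$. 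Combining, the flux for $\itervec{v}{j+1}$ is consistent with $[P_{j+1} c_j + (1-P_{j+1})]\vec{f}$, and substituting the hypothesis yields $c_{j+1} = 1 - P_{j+1}(1-c_j)$, which telescopes into the claimed product formula. Taking $j+1 = K$ delivers \eqref{eq:wave_speed_Newton}.

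The main obstacle is adapting the derivation of \eqref{eq:H-flux} in \ref{thm:local_conservation} to handle both a non-trivial initial iterate and a linearized flux at each outer step. Concretely, one must verify that the scalar factors $\phi_l(-\mu_l)$ multiply the accumulated flux contributions inherited from $\itervec{v}{j}$ exactly as in the original bookkeeping, and that the consistency of $\tilde{\vec{f}}_{j+1}$ guaranteed by \ref{thm:system_Newton} is enough to recover consistency with $\vec{f}$ in the new contribution. Once these technicalities are settled, the induction closes and the cumulative factor collapses to a single product over all $\sum_{j=1}^{K} N_j$ pseudo-time iterations, matching \eqref{eq:wave_speed_Newton}.
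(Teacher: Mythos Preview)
Your proposal is correct and rests on the same two ingredients the paper uses: the local conservation and flux consistency of Newton's method (\ref{thm:system_Newton}) together with the consistency factor from the pseudo-time analysis (\ref{thm:local_conservation}). The paper's own proof is a single sentence asserting that the result is a ``direct consequence'' of \ref{thm:local_conservation}; you take the same route but spell out the induction on the outer Newton index that the paper suppresses.

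Your explicit induction is arguably more than cosmetic. \ref{thm:local_conservation} is stated with initial guess $\xvec{u}^n$, whereas after the first Newton step the pseudo-time iterations start from $\itervec{v}{j}\neq\xvec{u}^n$. You correctly isolate this as the point requiring care and resolve it via the recursion $c_{j+1}=1-P_{j+1}(1-c_j)$, which telescopes to \eqref{eq:wave_speed_Newton}. The paper handles this implicitly by noting (just before the theorem) that choosing $\Delta\itervec{u}{0}=\xvec{0}$ is the only change needed, relying on the fact that the linearised residual retains the $(\,\cdot\, - \vec{u}^n)/\dt$ structure and that the linearised flux is consistent whenever the linearisation point is uniform---which it always is in the consistency check, since constant data propagates through every Newton and pseudo-time step. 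Your argument makes this explicit. So this is not a genuinely different approach, but a more detailed execution of the same one.
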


\begin{proof}
Since Newton's method is locally conservative and flux consistent, the result is a direct consequence of \ref{thm:local_conservation}.
\end{proof}


\section{Krylov subspace methods} \label{sec:krylov}

Krylov subspace methods are a more common choice than pseudo-time iterations for approximating the solutions of the linear systems arising within Newton's method. The goal of this section is to establish local conservation of Krylov subspace methods, and thereby also of Newton-Krylov methods, by relating them to pseudo-time iterations.

Let us consider a system of linear conservation laws and a correponding linearly implicit discretization in the form \eqref{eq:FV}. Due to the linearity of the numerical flux, there is a matrix $\xvec{D}$ such that the scheme can be expressed as
$$
\underbrace{\left( \frac{\xvec{I}}{\dt} + \frac{\xvec{D}}{\dx} \right)}_{\xvec{M}} \xvec{u}^{n+1} = \underbrace{ \frac{\xvec{u}^n}{\dt} }_{\xvec{d}}.
$$
We assume that $\xvec{M}$ is an invertible matrix.

By definition, the $(k+1)$st iteration of a Krylov subspace method finds an approximate solution $\itervec{w}{k+1}$ to $\xvec{M} \xvec{u} = \xvec{d}$ such that $\itervec{w}{k+1} - \itervec{w}{0} \in \mathcal{K}_{k+1}(\xvec{M}, \itervec{r}{0})$. The Krylov subspace $\mathcal{K}_{k+1}(\xvec{M}, \itervec{r}{0})$ is defined as
\begin{equation} \label{eq:Krylov_subspace}
\mathcal{K}_{k+1}(\xvec{M}, \itervec{r}{0}) := \vspan{ \{ \itervec{r}{0}, \xvec{M} \itervec{r}{0}, \dots, \xvec{M}^k \itervec{r}{0} \} },
\end{equation}
and $\itervec{r}{0} = \xvec{d} - \xvec{M} \itervec{w}{0}$ is the initial residual. In other words, there are coefficients $\alpha_0, \dots, \alpha_k$ such that
\begin{equation} \label{eq:Krylov_solution}
\itervec{w}{k+1} = \itervec{w}{0} + \left( \sum_{\ell=0}^k \alpha_\ell \xvec{M}^\ell \right) \itervec{r}{0}.
\end{equation}

We will now show that all Krylov subspace methods are locally conservative. To this end, we first establish a connection between Krylov subspace methods and pseudo-time iterations with explicit RK methods:

\begin{lemma} \label{lemma:Krylov_RK_equivalence}
Let $\itervec{w}{k+1}$ denote the $(k+1)$st iterate of a Krylov subspace method. There exists an ERK method with $s=k+1$ stages such that the first iterate $\itervec{u}{1}$ of a pseudo-time iteration using this method satisfies $\itervec{u}{1} = \itervec{w}{k+1}$. 
\end{lemma}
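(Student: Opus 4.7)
The approach is to specialize both iterations to the linear system $\xvec{M} \xvec{u} = \xvec{d}$, show that each produces an update of the form (polynomial in $\xvec{M}$) applied to $\itervec{r}{0}$, and then match polynomial coefficients by choosing the ERK parameters. Initialising with $\itervec{u}{0} = \itervec{w}{0}$ so that the two methods share the same initial residual $\itervec{r}{0}$, the Krylov iterate \eqref{eq:Krylov_solution} is by definition
\[
\itervec{w}{k+1} = \itervec{w}{0} + \sum_{\ell=0}^k \alpha_\ell \xvec{M}^\ell \itervec{r}{0},
\]
that is, a polynomial of degree at most $k$ in $\xvec{M}$ acting on $\itervec{r}{0}$.

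The first real step is to cast one pseudo-time step into the same form. In the linear case $\xvec{g}(\xvec{u}) = \xvec{M} \xvec{u} - \xvec{d}$, so each stage derivative $\xvec{k}_j := \xvec{g}(\xvec{U}_j)$ satisfies the linear recursion $\xvec{k}_j = -\itervec{r}{0} - \dtau \xvec{M} \sum_{l<j} a_{jl} \xvec{k}_l$. Stacking the $s$ stages and inverting the resulting block system via the Kronecker identity, using the terminating Neumann expansion $(\tmat{I} \kron \xvec{I} + \dtau \tmat{A} \kron \xvec{M})^{-1} = \sum_{j=0}^{s-1} (-\dtau)^j (\tmat{A}^j \kron \xvec{M}^j)$, valid because $\tmat{A}$ is strictly lower triangular, substitution into the outer ERK update \eqref{eq:RK_step} reduces one pseudo-time iteration to
\[
\itervec{u}{1} = \itervec{u}{0} + \sum_{j=0}^{s-1} (-1)^j \dtau^{j+1} \bigl(\tvec{b}^\top \tmat{A}^j \tvec{1}\bigr)\, \xvec{M}^j \itervec{r}{0}.
\]

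Setting $s = k+1$ matches the maximum degrees of the two polynomial updates, and the lemma reduces to the algebraic problem of finding $(\tmat{A}, \tvec{b})$ and $\dtau$ realising the moment conditions $(-1)^j \dtau^{j+1} \tvec{b}^\top \tmat{A}^j \tvec{1} = \alpha_j$ for $j = 0, \dots, k$. I would finish by giving an explicit construction: fix $\dtau = 1$, take $\tmat{A}$ to be the strictly lower triangular matrix with $a_{i, i-1} = 1$ and all other entries zero, and verify by induction that $\tmat{A}^j \tvec{1} = \tvec{e}_{j+1} + \cdots + \tvec{e}_s$, so that $\tvec{b}^\top \tmat{A}^j \tvec{1} = b_{j+1} + \cdots + b_s$. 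This upper-triangular system is uniquely solvable for $\tvec{b}$ given any prescribed right-hand sides, thereby realising the Krylov coefficients $\alpha_j$. The main obstacle is this first step, namely obtaining the closed-form polynomial representation of the ERK update in terms of the moments $\tvec{b}^\top \tmat{A}^j \tvec{1}$; once that identification is made, the coefficient matching is a routine triangular solve.
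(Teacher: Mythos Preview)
Your proof is correct and follows essentially the same route as the paper: both derive a polynomial-in-$\xvec{M}$ representation of one ERK pseudo-time step acting on $\itervec{r}{0}$, then match coefficients by choosing a Butcher matrix with a single nonzero subdiagonal and solving a triangular system for $\tvec{b}$. The only cosmetic difference is that the paper packages the polynomial via the stability function, writing the update as $\itervec{u}{1}=\itervec{u}{0}+\xvec{M}^{-1}(\xvec{I}-\phi(-\dtau\xvec{M}))\itervec{r}{0}$, whereas you expand the block inverse directly via nilpotency of $\tmat{A}$ to obtain the moments $\tvec{b}^\top\tmat{A}^j\tvec{1}$; expanding $\phi$ recovers exactly your coefficients, so the two derivations coincide.
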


\begin{proof}
We prove the lemma in two steps: The first step is to show that
$$
\itervec{u}{1} - \itervec{u}{0} \in \mathcal{K}_{k+1}(\xvec{M},\itervec{r}{0})
$$
when $\itervec{u}{0} = \itervec{w}{0}$. In other words, a single pseudo-time iteration with a $(k+1)$-stage explicit RK methods yields an approximation from the appropriate Krylov subspace. The second step is to choose $(\tmat{A}, \tmat{b}, \tmat{c})$ for the Runge-Kutta method such that the coefficients $\alpha_0, \dots, \alpha_k$ in \eqref{eq:Krylov_solution} are recovered.

For the first step, recall that pseudo-time iterations approximate solutions to the steady state problem
$$
\xvec{u}_\tau = \xvec{d} - \xvec{M} \xvec{u} =: \xvec{r}(\xvec{u}), \qquad \xvec{u}(0) = \itervec{u}{0}.
$$
A single iteration with an explicit RK method is obtained as
\begin{equation} \label{eq:RK_step_system}
\begin{aligned}
\tvec{\xvec{U}} &= \tvec{1} \kron \itervec{u}{0} + \dtau (\tvec{A} \kron \xvec{I}) \tvec{\xvec{R}}(\tvec{\xvec{U}}), \\
\itervec{u}{1} &= \itervec{u}{0} + \dtau (\tvec{b}^\top \kron \xvec{I}) \tvec{\xvec{R}} (\tvec{\xvec{U}}),
\end{aligned}
\end{equation}
where
\begin{equation*}
\tvec{\xvec{R}}(\tvec{\xvec{U}}) = ( (\xvec{d} - \xvec{M} \xvec{U}_1)^\top, \dots, (\xvec{d} - \xvec{M} \xvec{U}_s)^\top )^\top 
= (\tvec{1} \kron \xvec{d}) - (\tvec{I} \kron \xvec{M}) \tvec{\xvec{U}}.
\end{equation*}
Inserting $\tvec{\xvec{U}}$ from \eqref{eq:RK_step_system} into the latter expression leads to
\begin{align*}
\tvec{\xvec{R}}(\tvec{\xvec{U}}) &= (\tvec{1} \kron \xvec{d}) - (\tvec{I} \kron \xvec{M}) \left[ \tvec{1} \kron \itervec{u}{0} + \dtau (\tvec{A} \kron \xvec{I}) \tvec{\xvec{R}}(\tvec{\xvec{U}}) \right] \\
&= (\tvec{1} \kron \xvec{d}) - (\tvec{1} \kron \xvec{M} \itervec{u}{0}) - \dtau (\tvec{A} \kron \xvec{M}) \tvec{\xvec{R}}(\tvec{\xvec{U}}) \\
&= (\tvec{1} \kron \itervec{r}{0}) - \dtau (\tvec{A} \kron \xvec{M}) \tvec{\xvec{R}}(\tvec{\xvec{U}}).
\end{align*}
Solving for $\tvec{\xvec{R}}(\tvec{\xvec{U}})$ results in
$$
\tvec{\xvec{R}}(\tvec{\xvec{U}}) = \left[ \tvec{I} \kron \xvec{I} + \dtau (\tvec{A} \kron \xvec{M}) \right]^{-1} (\tvec{1} \kron \itervec{r}{0}).
$$
Inserting this expression into $\itervec{u}{1}$ from \eqref{eq:RK_step_system} yields
\begin{align*}
\itervec{u}{1} &= \itervec{u}{0} + \dtau (\tvec{b} \kron \xvec{I}) \left[ \tvec{I} \kron \xvec{I} + \dtau (\tvec{A} \kron \xvec{M}) \right]^{-1} (\tvec{1} \kron \itervec{r}{0}) \\
&= \itervec{u}{0} + \xvec{M}^{-1} \left( \dtau (\tvec{b} \kron \xvec{M}) \left[ \tvec{I} \kron \xvec{I} + \dtau (\tvec{A} \kron \xvec{M}) \right]^{-1} (\tvec{1} \kron \xvec{I}) \right) \itervec{r}{0}.
\end{align*}
We identify the paranthesised portion of this expression as $\xvec{I} - \phi(-\dtau \xvec{M})$, where $\phi(z)$ is the stability function of the RK method. Thus,
\begin{equation} \label{eq:RK_Krylov}
\itervec{u}{1} = \itervec{u}{0} + \xvec{M}^{-1} (\xvec{I} - \phi(-\dtau \xvec{M})) \itervec{r}{0}.
\end{equation}
Now, $\phi(z)$ is a polynomial of degree $s=k+1$ and from \eqref{eq:stability_function} we see that its constant term is $1$. Hence, $\xvec{M}^{-1}(\xvec{I} - \phi(-\xvec{M}))$ is a polynomial in $\xvec{M}$ of degree $k$. Consequently, $\itervec{u}{1} - \itervec{u}{0} \in \mathcal{K}_{k+1}(\xvec{M},\itervec{r}{0})$, which completes the first step of the proof.

For the second step we must find an explicit RK method whose stability polynomial satisfies $z^{-1}(1-\phi(-z)) = \sum_{\ell=0}^k \alpha_\ell z^\ell$. We do not require this RK method to be accurate or even consistent; it suffices that it exists.

Consider the $(k+1)$-stage explicit RK method defined by the Butcher tableau

\[
\renewcommand\arraystretch{1.2}
\begin{array}
{c|ccccc}
0 & 0 \\
c_1 & -a & 0 \\
c_2 & & -a & 0 \\
\vdots & & & \ddots & \ddots \\
c_k & & & & -a & 0 \\
\hline
& b_0 & b_1 & \dots & b_{k-1} & b_k 
\end{array}
\]

\noindent By direct computation we can verify that $(\tmat{I} - z \tmat{A})^{-1}$ is a lower triangular Toeplitz matrix with elements $(-az)^\ell$ on the $\ell$th subdiagonal ($\ell=0$ being the main diagonal and $\ell = k$ being the bottom left element). From \eqref{eq:stability_function}, the stability function of this method is therefore
$$
\phi(z) = 1 + z \sum_{i=0}^k \sum_{j=i}^k b_j (-az)^i.
$$
Consequently we are looking for coefficients $b_j$ that satisfy the relation
$$
\frac{1 - \phi(-z)}{z} = \sum_{i=0}^k \sum_{j=i}^k b_j (az)^i = \sum_{\ell=0}^k \alpha_\ell z^\ell.
$$
Matching exponents of $z$, this amounts to solving the triangular system
$$
\begin{bmatrix}
1 & 1 & \dots & 1 & 1 \\
0 & 1 & \dots & 1 & 1 \\
& & \ddots & & \\
0 & 0 & \dots & 0 & 1
\end{bmatrix}
\begin{pmatrix}
b_0 \\ b_1 \\ \vdots \\ b_k
\end{pmatrix}
=
\begin{pmatrix}
a^{-0} \alpha_0 \\
a^{-1} \alpha_1 \\
\vdots \\
a^{-k} \alpha_k
\end{pmatrix}.
$$
This system is uniquely solvable for any coefficients $\alpha_\ell, \, \ell = 0, \dots, k$ and $a \neq 0$. Hence, an explicit RK method can always be found such that a single pseudo-time iteration reproduces the $(k+1)$th Krylov vector when the initial guess is given by $\itervec{u}{0} = \itervec{w}{0}$.
\end{proof}

With \ref{lemma:Krylov_RK_equivalence} established, the following is a direct consequence of \ref{thm:local_conservation}:

\begin{theorem} \label{thm:Krylov_locally_conservative}
With the initial guess $\itervec{w}{0} = \xvec{u}^n$, Krylov subspace methods applied to the locally conservative linear discretization \eqref{eq:FV} are locally conservative.
\end{theorem}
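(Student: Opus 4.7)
The plan is to compose the two results already established in the paper: Lemma \ref{lemma:Krylov_RK_equivalence}, which embeds each Krylov iterate into a single pseudo-time step, and Theorem \ref{thm:local_conservation}, which certifies local conservation of pseudo-time iterations. Concretely, I would fix an arbitrary iteration count $k \geq 0$ and consider the $(k+1)$st Krylov iterate $\itervec{w}{k+1}$ produced from the initial guess $\itervec{w}{0} = \xvec{u}^n$. By Lemma \ref{lemma:Krylov_RK_equivalence}, there exist a $(k+1)$-stage ERK method $(\tmat{A},\tvec{b},\tvec{c})$ and a pseudo-time step $\dtau$ such that one pseudo-time iteration of the residual equation $\xvec{u}_\tau = -\xvec{g}(\xvec{u})$, with initial guess $\itervec{u}{0} = \xvec{u}^n$, satisfies $\itervec{u}{1} = \itervec{w}{k+1}$.

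Next, I would invoke Theorem \ref{thm:local_conservation} with $N=1$ applied to the locally conservative linear discretization \eqref{eq:FV}. Since the underlying numerical flux $\vec{f}_{i \pm \frac{1}{2}}$ is consistent and the initial iterate is $\xvec{u}^n$, the theorem furnishes a numerical flux $\vec{h}_{i+\frac{1}{2}}^{(1)}$ — which we simply relabel $\vec{h}_{i+\frac{1}{2}}^{(k+1)}$ — such that setting $\vec{u}_i^{n+1} = (\itervec{w}{k+1})_i$ places the iterate in the conservative form \eqref{eq:conservative_form}. As $k$ was arbitrary, every Krylov iterate obeys such a local flux balance, which is the definition of local conservation for the iterative method.

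The one detail requiring care is that the pseudo-time framework of Section \ref{sec:pseudo-time} is phrased for the nonlinear function $\vec{g}_i$ defined in \eqref{eq:g-fun}, while Lemma \ref{lemma:Krylov_RK_equivalence} uses the linear residual $\xvec{r}(\xvec{u}) = \xvec{d} - \xvec{M}\xvec{u}$. I would verify that these coincide up to sign: because the discretization \eqref{eq:FV} is assumed linear, $\vec{g}(\xvec{u}) = \xvec{M}\xvec{u} - \xvec{d} = -\xvec{r}(\xvec{u})$, so the ERK update \eqref{eq:RK_step_system} produced by the lemma is exactly the ERK update \eqref{eq:RK_step}--\eqref{eq:RK_stages} used in Theorem \ref{thm:local_conservation}. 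This alignment is the only "obstacle," and it is routine since linearity of $\vec{f}_{i+\frac{1}{2}}$ ensures that the stage fluxes $\vec{\tvec{f}}_{i+\frac{1}{2}}^{(k)}$ appearing in \eqref{eq:H-flux} are well defined for every stage vector generated by the constructed ERK scheme.

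I would close the argument with the observation that flux consistency is not claimed: the scalar $c$ from \eqref{eq:wave_speed} associated with the ERK method reverse-engineered in the lemma is in general not unity, so the Krylov iterate is locally conservative but, as for generic pseudo-time iterations, flux consistency need not hold. This aligns with the broader narrative of the section and sets up the subsequent discussion of Newton--Krylov methods.
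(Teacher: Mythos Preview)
Your proposal is correct and follows essentially the same route as the paper: invoke Lemma~\ref{lemma:Krylov_RK_equivalence} to realise any Krylov iterate as a single pseudo-time step with some ERK method starting from $\xvec{u}^n$, then apply Theorem~\ref{thm:local_conservation} (with $N=1$) to obtain the conservative form. Your additional remarks on the sign alignment $\xvec{g}(\xvec{u}) = -\xvec{r}(\xvec{u})$ and on the absence of a flux-consistency claim are accurate elaborations that the paper leaves implicit.
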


\begin{proof}
For any iteration of any Krylov subspace method, \ref{lemma:Krylov_RK_equivalence} shows that there is an equivalent explicit RK method that yields the same numerical solution in one pseudo-time iteration. By \ref{thm:local_conservation}, all explicit RK methods are locally conservative, hence so are Krylov subspace methods.
\end{proof}

We do not know what the coefficients $\alpha_0, \dots, \alpha_k$ are and consequently we cannot identify a corresponding RK method. Further, each $\alpha_j$ will in general change with every iteration. We can therefore not know a priori if a given Krylov subspace method is flux consistent. We can also not apply the extension of the Lax-Wendroff theorem in \ref{thm:LW} since $\alpha_j$ in general will not remain constant upon grid refinement.

On the other hand, \ref{thm:Krylov_locally_conservative} suggests that we can apply one pseudo-time iteration with the explicit Euler method prior to applying the Krylov subspace method in order to enforce consistency, without violating local conservation. Further yet, since \ref{thm:local_conservation} is indifferent to the RK method used in previous iterations, we can conclude that \ref{thm:Krylov_locally_conservative} also applies to \emph{restarted} Krylov methods, with each restart corresponding to a pseudo-time iteration with its own RK method. In fact, local conservation will be retained even if we swap Krylov method mid-solve, or if we mix Krylov subspace methods and pseudo-time iterations.

Finally we note that \ref{thm:Krylov_locally_conservative} together with \ref{thm:system_Newton} implies that Newton-Krylov methods are locally conservative:

\begin{theorem} \label{thm:inexact_Newton_Krylov}
Apply Newton's method to the implicit scheme \eqref{eq:FV} with initial guess $\itervec{v}{0} = \xvec{u}^n$. Approximate the solution to the each linear system within the Newton iterations using any Krylov subspace method with initial guess $\Delta \itervec{w}{0} = \xvec{0}$. Then the Newton-Krylov iterations are locally conservative.
\end{theorem}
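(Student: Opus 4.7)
The plan is to reduce the Newton-Krylov scheme to a Newton-pseudo-time scheme that is already covered by \ref{thm:inexact_Newton_pseudo}, using \ref{lemma:Krylov_RK_equivalence} as the bridge between a Krylov inner solve and a single pseudo-time step.

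Fix an outer Newton index $j$ and consider the Jacobian system $\xvec{g}'(\itervec{v}{j}) \Delta \xvec{v} = -\xvec{g}(\itervec{v}{j})$ arising in the $j$th Newton iteration. Starting from $\Delta \itervec{w}{0} = \xvec{0}$, the Krylov method produces, after $N_j$ steps, an iterate $\Delta \itervec{w}{N_j}$. I would apply \ref{lemma:Krylov_RK_equivalence} to this linear system, whose coefficient matrix is invertible by the nonsingularity hypothesis of \ref{thm:system_Newton}, to obtain an explicit $N_j$-stage Runge-Kutta method $(\tmat{A}_j, \tvec{b}_j, \tvec{c}_j)$ and a pseudo-time step $\dtau_j$ such that a single pseudo-time iteration with this method and the same zero initial guess returns exactly $\Delta \itervec{w}{N_j}$. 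Carrying out this substitution in every Newton step yields a Newton-pseudo-time scheme whose iterates $\itervec{v}{j}$ agree, step by step, with those of the original Newton-Krylov scheme. Invoking \ref{thm:inexact_Newton_pseudo} on this equivalent scheme then delivers local conservation of the Newton-Krylov iterates.

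The only subtlety I anticipate is that the RK methods produced by \ref{lemma:Krylov_RK_equivalence} differ from one Newton iteration to the next and are not required to be consistent as approximations of any ODE, so one might worry that \ref{thm:inexact_Newton_pseudo} needs some form of uniformity. However, both \ref{thm:inexact_Newton_pseudo} and the underlying \ref{thm:local_conservation} allow each pseudo-time iteration to employ its own ERK method and derive local conservation from the purely algebraic telescoping structure of \eqref{eq:RK_step}-\eqref{eq:RK_stages}, without any appeal to accuracy or classical consistency of $(\tmat{A}_k, \tvec{b}_k, \tvec{c}_k)$. No extra hypothesis is therefore needed, and the proof reduces to a one-line citation of \ref{lemma:Krylov_RK_equivalence} and \ref{thm:inexact_Newton_pseudo}.
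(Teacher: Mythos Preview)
Your proposal is correct and amounts to the same argument as the paper's, merely citing the intermediate results in a different order: the paper invokes \ref{thm:system_Newton} together with \ref{thm:Krylov_locally_conservative} (the latter being built from \ref{lemma:Krylov_RK_equivalence} and \ref{thm:local_conservation}), whereas you invoke \ref{lemma:Krylov_RK_equivalence} directly and then \ref{thm:inexact_Newton_pseudo} (itself built from \ref{thm:system_Newton} and \ref{thm:local_conservation}). One small inaccuracy: the invertibility of the Jacobian needed for \ref{lemma:Krylov_RK_equivalence} is not the nonsingularity hypothesis of \ref{thm:system_Newton} (which concerns only constant states), but rather the standing assumption in \ref{sec:krylov} that the linear system matrix $\xvec{M}$ is invertible.
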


\begin{proof}
Since Newton's method is locally conservative and flux consistent by \ref{thm:system_Newton}, the result is a direct consequence of \ref{thm:Krylov_locally_conservative}.
\end{proof}


\section{Numerical experiments} \label{sec:numerical_experiments}

As target problem for the experiments in this section we use the 2D compressible Euler equations,
\begin{equation} \label{eq:Eulers_equations}
\begin{bmatrix}
\rho \\ \rho u \\ \rho v \\ \rho E
\end{bmatrix}_t
+
\begin{bmatrix}
\rho u \\
\rho u^2 + p \\
\rho u v \\
(\rho E + p) u
\end{bmatrix}_x
+
\begin{bmatrix}
\rho v \\
\rho u v \\
\rho v^2 + p \\
(\rho E + p) v
\end{bmatrix}_y
= 0,
\end{equation}
posed on the domain $(x,y) \in (-5, 15] \times (-5, 5]$. Here, $\rho, u, v, E$ and $p$ respectively denote density, horizontal and vertical velocity components, total energy per unit mass and pressure. The pressure is related to the other variables through the equation of state
$$
p = (\gamma-1) \rho e,
$$
where $\gamma = 1.4$ and $e = E - (u^2 + v^2)/2$ is the internal energy density. The domain is taken to be periodic in both spatial coordinates. The setting is the isentropic vortex problem \cite{shu1998essentially} with initial conditions
\begin{align*}
\rho_0 &= \left( \frac{1 - \epsilon^2 (\gamma - 1) M_\infty^2}{8 \pi^2} \exp{(r)} \right)^\frac{1}{\gamma - 1}, \\
u_0 &= 1 - \frac{\epsilon y}{2 \pi} \exp{(r/2)}, \\
v_0 &= \frac{\epsilon x}{2 \pi} \exp{(r/2)}, \\
p_0 &= \frac{\rho_0^\gamma}{\gamma M_\infty^2},
\end{align*}
where $r = 1 - x^2 - y^2$. Here, $\epsilon = 5$ is the circulation and $M_\infty = 0.5$ is the Mach number. As the solution evolves in time, the initial vortex propagates in the horizontal direction with unit speed.

This problem does not adhere to the assumptions made throughout the paper since it is posed in 2D. Nevertheless, we will see shortly that the theoretical results presented so far appear to hold also in this setting.


\subsection{Convergence tests}

\subsubsection*{Pseudo-time iterations: SSPRK3}

We begin by corroborating the extension of the Lax-Wendroff theorem presented in \ref{thm:LW}. With implicit Euler used in time, experimental verification of this theorem was provided in \cite{birken2021conservative}. However, \ref{thm:Runge-Kutta} implies that the result holds also when other implicit RK methods are used in time, so long as they fulfill condition \eqref{eq:v_conditions}.

In the following experiments we discretize the Euler equations in space using finite volumes with central fluxes and the three-stage Lobatto IIIC method in time. This fully implicit RK method satisfies \eqref{eq:v_conditions} with the vector $\ivec{v} = (0,0,1)^\top$. The simulations are run to time $t=0.1$ with space and time grids satisfying by $\dy = \dx/2$ and $\dt = \dx/4$. In each time step, the discrete solution is approximated using three pseudo-time iterations using the explicit strong stability preserving method SSPRK3 from \cite{shu1988efficient}. The pseudo-time steps are chosen to be $\dtau_i = \dt/\sqrt{i}, \, i = 1,2,3$. The stability function of SSPRK3 is given by
$$
\phi(z) = 1 + z + z^2/2 + z^3/6.
$$
\ref{thm:LW} therefore predicts that the solution of the scheme, if convergent, approaches the solution to a modified conservation law with modification constant
$$
c(\mu_0,\mu_1,\mu_2) = 1 - \phi(\mu_0)\phi(\mu_1)\phi(\mu_2) \approx 0.9101.
$$
For this particular problem, the presence of $c$ causes a corresponding reduction of the vortex propagation speed. \ref{fig:pseudo} shows the $L^2$-error in the density component of the solution, as measured with respect to the exact solution of the original system of conservation laws \eqref{eq:Eulers_equations} and that of the modified version. Clearly the numerical solution approaches the solution of the modified equations, not the original ones.

Shown also is the error, measured with respect to the original conservation law, of the same scheme but where flux consistency has been enforced using a single pseudo-time iteration with explicit Euler and $\mu=1$. As expected, this ensures convergence towards the correct solution.

\subsubsection*{Newton-SSPRK3}

We now repeat the experiment but add two Newton iterations per time step. The solutions to the linear systems are then approximated using the same three pseudo-time iterations as before. In addition to the discretization described previously, we also perform the experiment with a different one: In space, the entropy conservative and kinetic energy preserving finite volume scheme of Chandrashekar \cite{chandrashekar2013kinetic} is used and in time, the two-stage Radau IIA method, which satisfies \eqref{eq:v_conditions} with the vector $\ivec{v} = (0,1)^\top$.

\ref{thm:inexact_Newton_pseudo} suggests that the resulting schemes are consistent with a modified system of conservation laws with modification constant
$$
c = 1 - (\phi(\mu_0)\phi(\mu_1)\phi(\mu_2))^2 \approx 0.9919.
$$
Rather than computing the Jacobian explicitly, it is approximated using finite differences such that for any two vectors $\xvec{u}$ and $\xvec{v}$,
\begin{equation} \label{eq:inexact_Jacobian}
\xvec{g}'(\xvec{u}) \xvec{v} \approx \frac{\xvec{g}(\xvec{u} + \epsilon \xvec{v}) - \xvec{g}(\xvec{u})}{\epsilon}, \quad \epsilon = \frac{10^{-7}}{\| \xvec{v} \|}.
\end{equation}
Although an extension of the Lax-Wendroff theorem incorporating Newton's method is not yet available, \ref{fig:Newton-pseudo} indicates convergence towards the solution of the modified equations, not of the original ones. Once again, this is remedied by enforcing flux consistency with a single explicit Euler iteration. Here, the errors corresponding to the two different space-time discretizations are similar enough that the lines are on top of each other. 

\begin{figure}[tbhp]
\centering
\subfloat[SSPRK3]{\label{fig:pseudo}\includegraphics[width=0.5\textwidth]{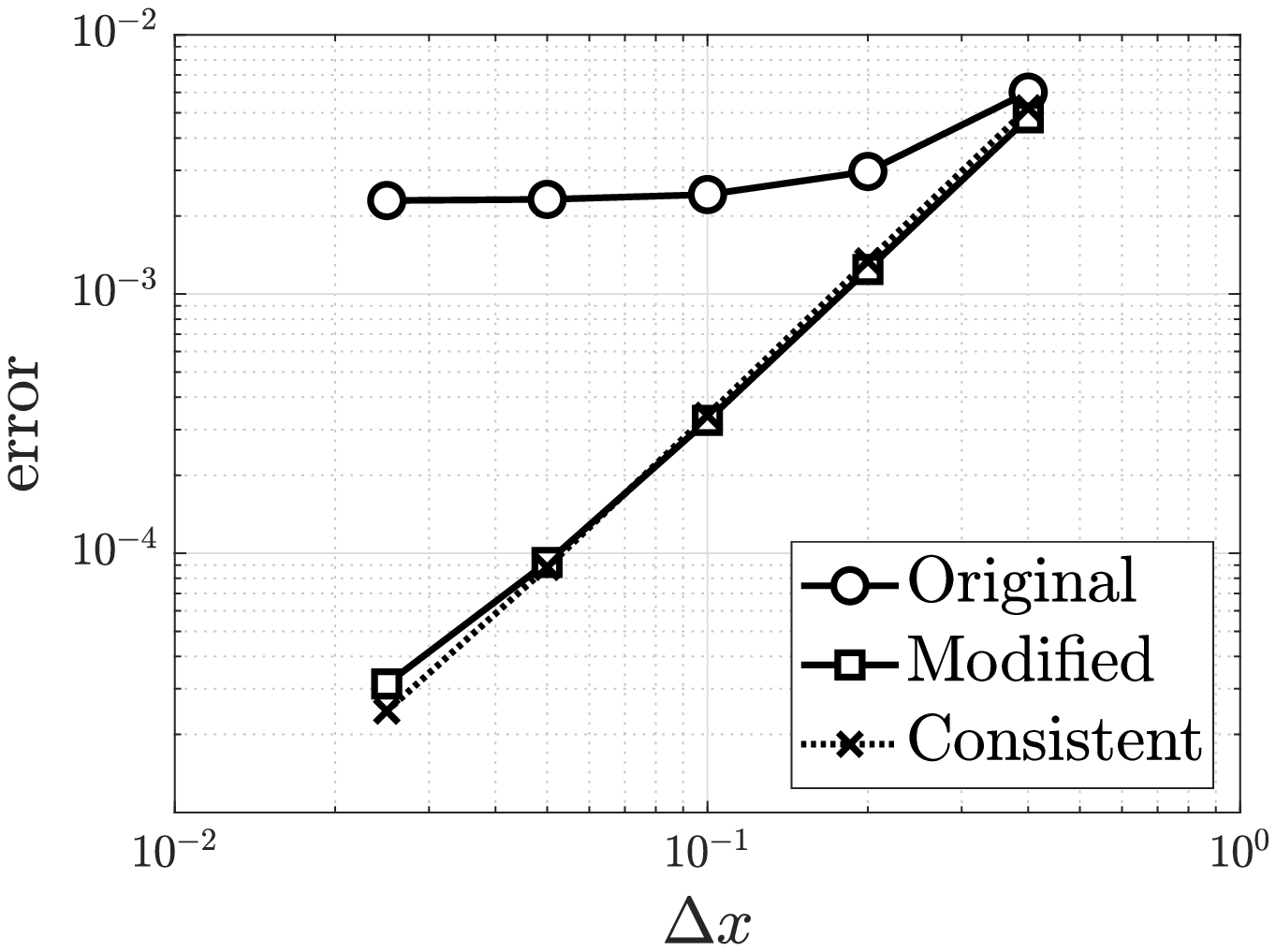}}
\subfloat[Newton-SSPRK3]{\label{fig:Newton-pseudo}\includegraphics[width=0.5\textwidth]{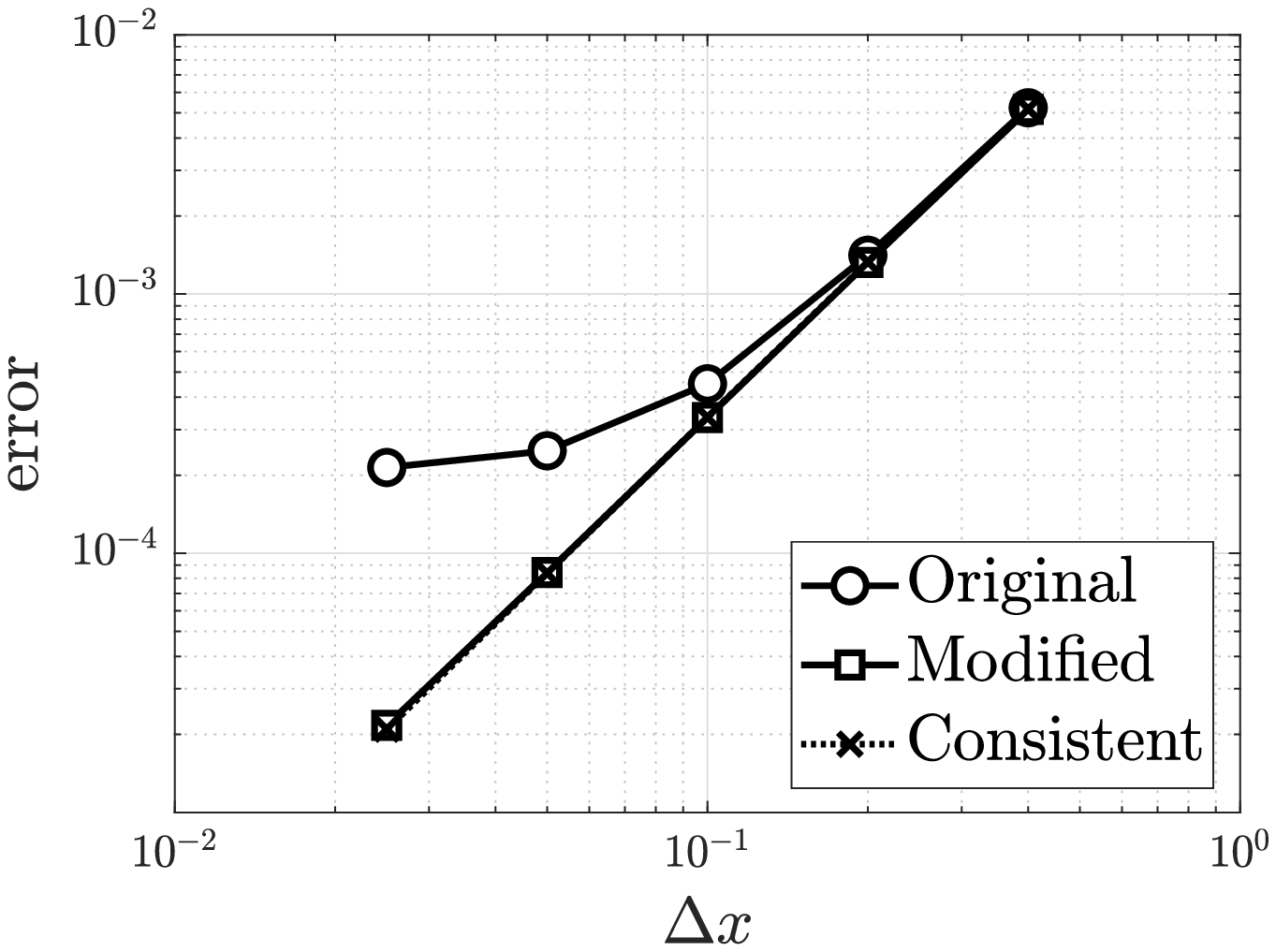}}
\caption{Density error upon grid refinement for the compressible Euler equations. Convergence is seen towards the modified conservation laws (Modified), not the original ones (Original), unless flux consistency is enforced (Consistent). (a) Pseudo-time iterations using SSPRK3. (b) Newton's method with SSPRK3 as subsolver for the linear systems.}
\label{fig:convergence_pseudo}
\end{figure}

\subsection*{Newton-Krylov: Fixed iterations}

Newton-Krylov methods are a more common choice of solver than Newton with pseudo-time iterations. \ref{thm:inexact_Newton_Krylov} establishes that they are all locally conservative. Conceivably, a Lax-Wendroff type result might be available for these methods, although we have not presented one here. It is of interest to explore experimentally if convergence is observed, and if so, towards what solution.

We repeat the previous experiment, this time with the three-stage Lobatto IIIC method in time and Chandrashekar's finite volume scheme in space, but replace SSPRK3 with GMRES. Here we consider four cases: Using one Newton and one Krylov iteration per time step (N1K1); one Newton and two Krylov iterations (N1K2); one Newton iteration with GMRES run to a tolerance of $10^{-14}$ (N1), i.e. effectively with 'exact' linear solves; Newton-GMRES run until the Newton residual is smaller than $10^{-15}$ (Exact), i.e. effectively with 'exact' nonlinear solves. \ref{fig:fixed_iter} shows the error with respect to the original conservation law of the four schemes. The errors for N1K2, N1 and the exact solver are on top of each other, suggesting that the discretization dominates the error. The numerical solutions appear to converge to the correct solution. However, N1K1 displays a different behaviour, suggesting that the iteration error dominates. The error curve appears to flatten as the grid is refined, although it cannot be deduced whether it will continue towards zero or if it reaches a plateau. Thus, it remains unclear if GMRES is flux consistent.

\subsection*{Newton-Krylov: Tolerance}

In practice, the number of Newton iterations will not be preset but rather governed by a relative and/or an absolute tolerance. Here, we set both of these to a value $tol$ and once again explore the convergence behaviour. To set the tolerances of the GMRES iterations we follow the procedure described by Eisenstat and Walker \cite{eisenstat1996choosing} with parameters $\gamma = \eta_{max} = 0.9$; see \cite[Chapter 6]{kelley1995iterative} for details. The density errors using $tol \in \{ 10^{-3}, 10^{-4}, 10^{-5} \}$ are shown in \ref{fig:tolerance}.

Two distinct phenomena can be observed: Firstly, when $tol = 10^{-5}$, the convergence behaviour changes from one similar to the exact solver in \ref{fig:fixed_iter} to one resembling N1K1. Thus, the error is seen to change from being discretization dominated to being iteration dominated as the grid is refined. With $tol = 10^{-4}$ the iteration error appears to dominate throughout.

Secondly, when $tol = 10^{-3}$ the error eventually stops converging. This behavior is explained by the observation that with a fixed tolerance, the initial guess will be a sufficiently accurate approximation of the solution if $\dt$ is small enough. In that case, the Newton and GMRES iterations are terminated without updating the solution. 

\begin{figure}[tbhp]
\centering
\subfloat[Fixed iterations]{\label{fig:fixed_iter}\includegraphics[width=0.5\textwidth]{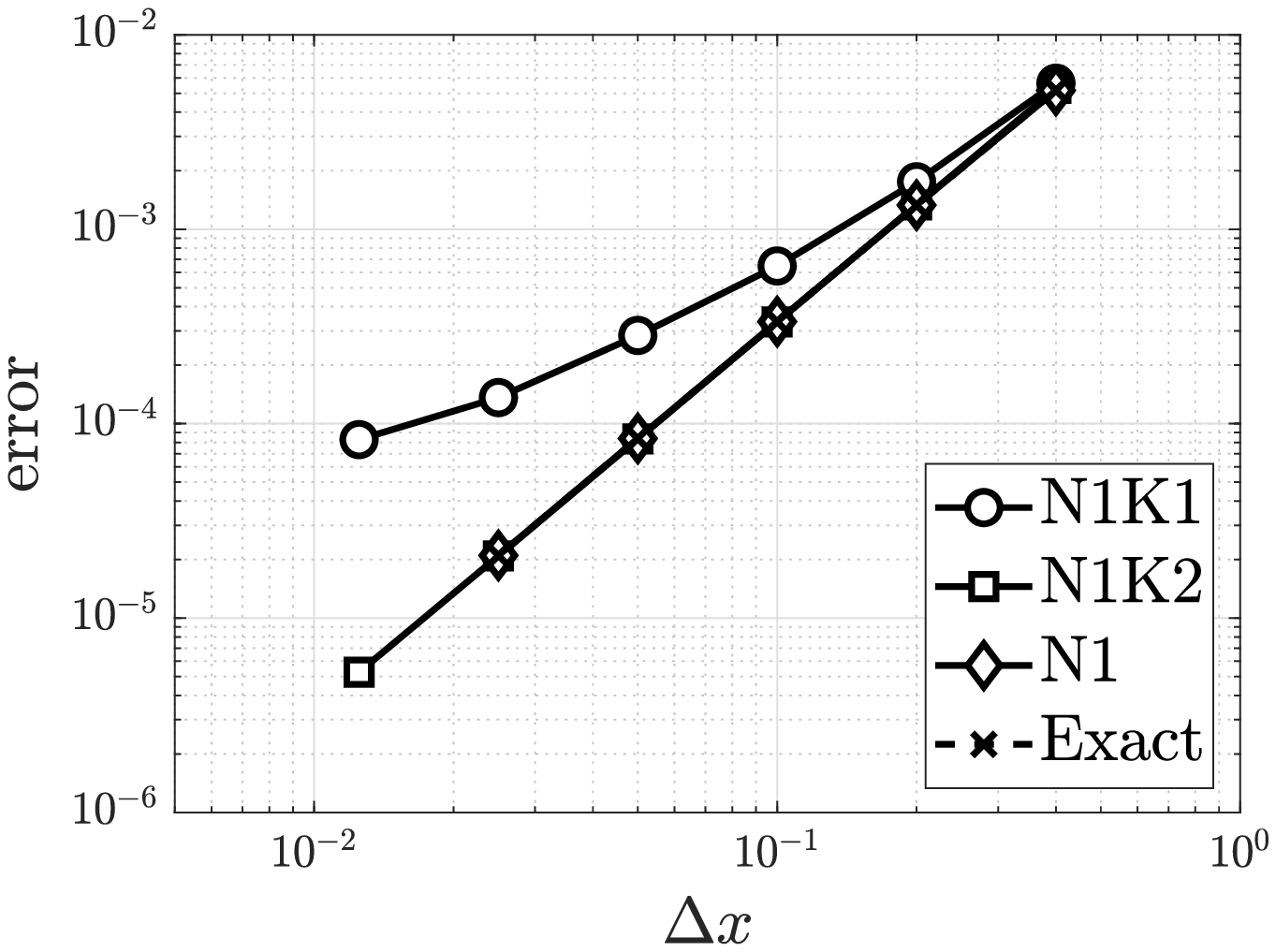}}
\subfloat[Tolerance]{\label{fig:tolerance}\includegraphics[width=0.5\textwidth]{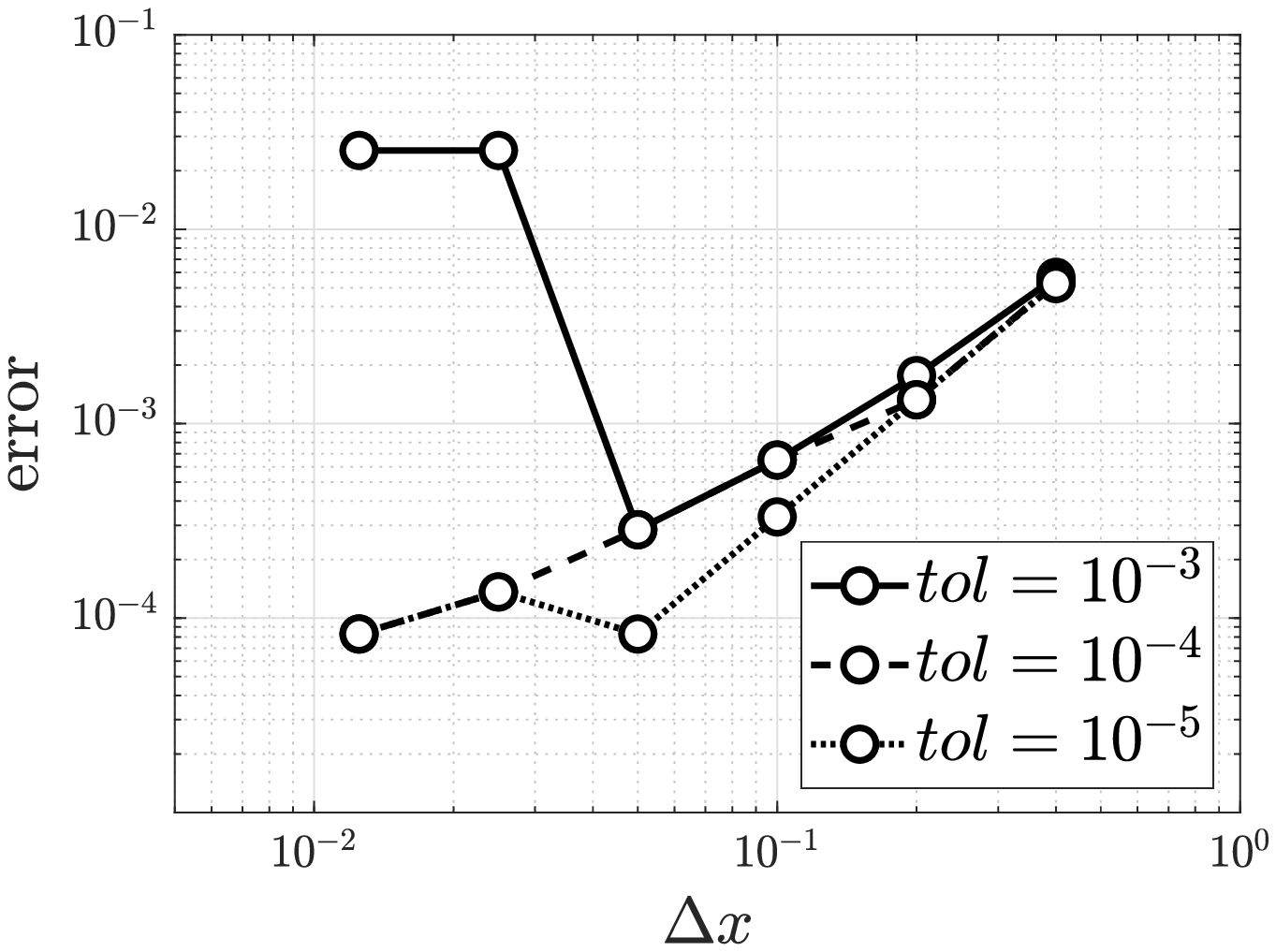}}
\caption{Density error upon grid refinement for the compressible Euler equations. (a) The number of iterations is fixed: One Newton and one GMRES iteration per time step (N1K1); One Newton and two GMRES iterations (N1K2); one Newton with a nearly exact linear solver (N1); nearly exact Newton-GMRES (Exact). (b) Tolerance governed Newton-GMRES with the Eisenstat-Walker procedure.}
\label{fig:convergence_Newton-GMRES}
\end{figure}


\subsection{Acceleration experiments}

In \cite{birken2021conservative}, numerical experiments were performed indicating that considerable efficiency gains can be made with pseudo-time iterations by enforcing flux consistency. It is not clear whether Newton-Krylov methods are flux consistent. It is therefore worthwhile exploring if enforced flux consistency leads to efficiency gains also in this case. We use the tolerance governed Newton-GMRES solver with the Eisenstat-Walker procedure and compare the standard solver with one where flux consistency is enforced with explicit Euler before each call to GMRES.

In this experiment we use Chandrashekar's finite volume scheme with the implicit Euler method in time to compute a single time step with $\dt = 0.1$. The running cost of the two solvers is measured in terms of the number of evaluations of the full space discretization. Each Newton iteration requires a single such evaluation; see \eqref{eq:Newtons_method}. Each GMRES iteration also needs a single function evaluation during the computation of the approximate Jacobian matrix-vector product \eqref{eq:inexact_Jacobian}.

The $k$th pseudo-time step with explicit Euler applied within the $j$th Newton iteration takes the form
$$
\frac{\Delta \itervec{u}{k+1} - \Delta \itervec{u}{k}}{\dtau_k} + \xvec{g}'(\itervec{v}{j}) \Delta \itervec{u}{k} + \xvec{g}(\itervec{v}{j}) = \xvec{0}.
$$
Consider the case $k=0$ and recall from \ref{thm:inexact_Newton_pseudo} that local conservation follows if the initial guess is $\Delta \itervec{u}{0} = \xvec{0}$. Thus, enforcing flux consistency by one pseudo-time iteration with explicit Euler simply amounts to setting $\Delta \itervec{u}{1} = -\dt \xvec{g}(\itervec{v}{j})$. This single function evaluation is already computed within Newton's method, hence flux consistency comes at no additional cost. In fact, the only change necessary to the solver is to alter the initial guess for GMRES from $\Delta \itervec{w}{0} = \xvec{0}$ to $\Delta \itervec{w}{0} = -\dt \xvec{g}(\itervec{v}{j})$.


Three cases with different CFL numbers are considered. \ref{fig:Eval_Res} shows the number of function evaluations required to reach a particular residual $\| \xvec{g}(\itervec{v}{j}) \|$, where the $L^2$-norm is used. \ref{fig:Res_Eval_Newton} shows the function evaluations distributed accross the Newton iterations. In all cases, the flux consistent initial guess (dotted lines) reduces the number of necessary iterations for residuals greater than roughly $10^{-3}$, compared to the regular solver (solid lines). However, for smaller residuals the situation varies with the CFL number. At the smallest CFL, flux consistency remains beneficial even for finer tolerances. However, for the largest CFL the opposite trend is seen.

A possible explanation for these observations is that the explicit Euler method introduces significant errors to the numerical solution when the CFL number is large. Its stability region is small, hence problems with large CFL numbers are unsurprising. It is possible to find other explicit Runge-Kutta methods that enforce flux consistency, in principle with much larger stability regions. However, such methods will necessarily have more stages and thus impart additional costs on the solver. 

Finally we note that at the smallest tolerances, no discernable difference is seen between the two solvers. Presumably this happens because flux consistency is achieved, either exactly or very nearly, by the standard Newton-GMRES solver when the tolerance is small and the number of iterations is large. In conclusion, whether enforced flux consistency is beneficial for Newton-GMRES is case dependent.

\begin{figure}[tbhp]
\centering
\subfloat[]{\label{fig:Eval_Res}\includegraphics[width=0.5\textwidth]{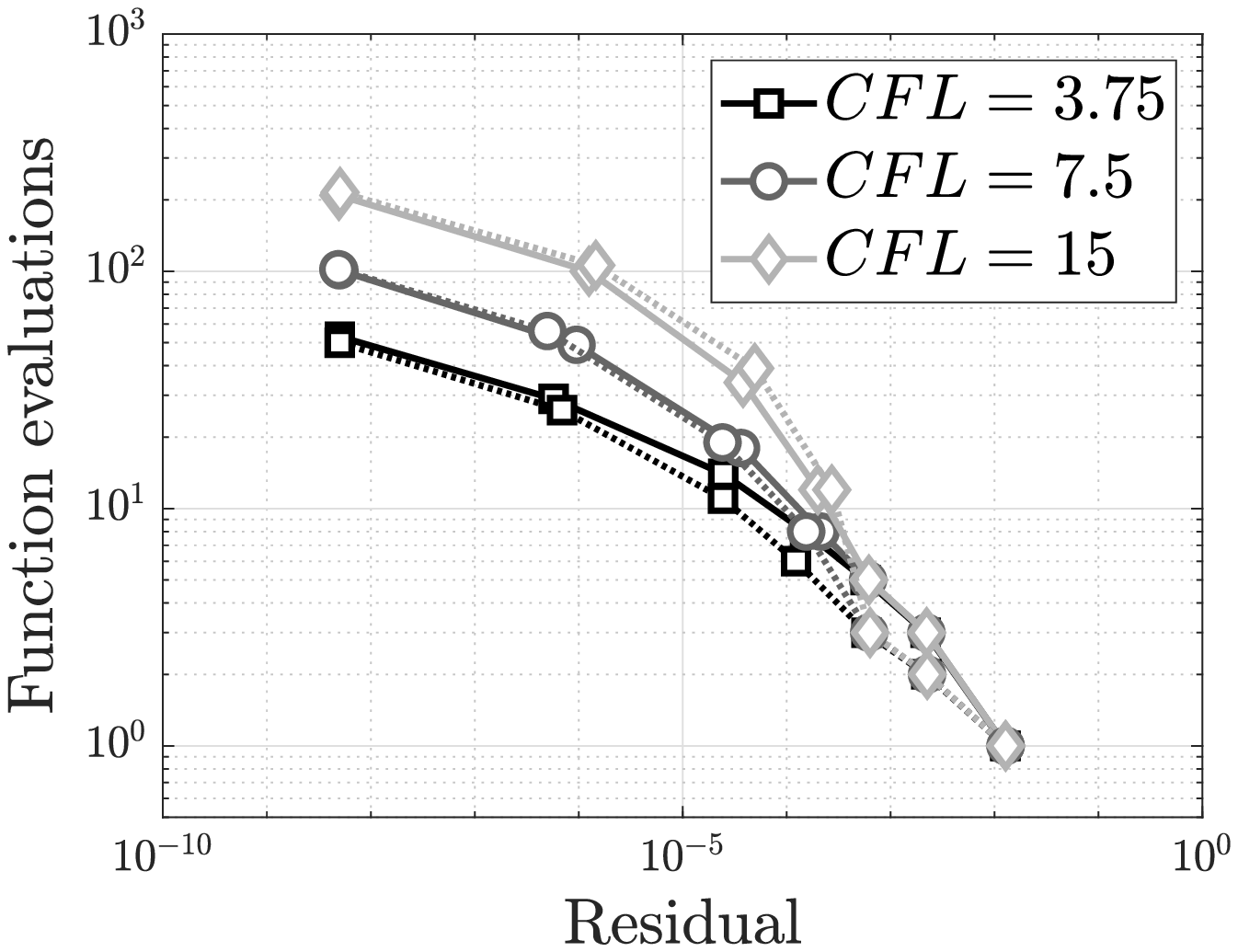}}
\subfloat[]{\label{fig:Res_Eval_Newton}\includegraphics[width=0.5\textwidth]{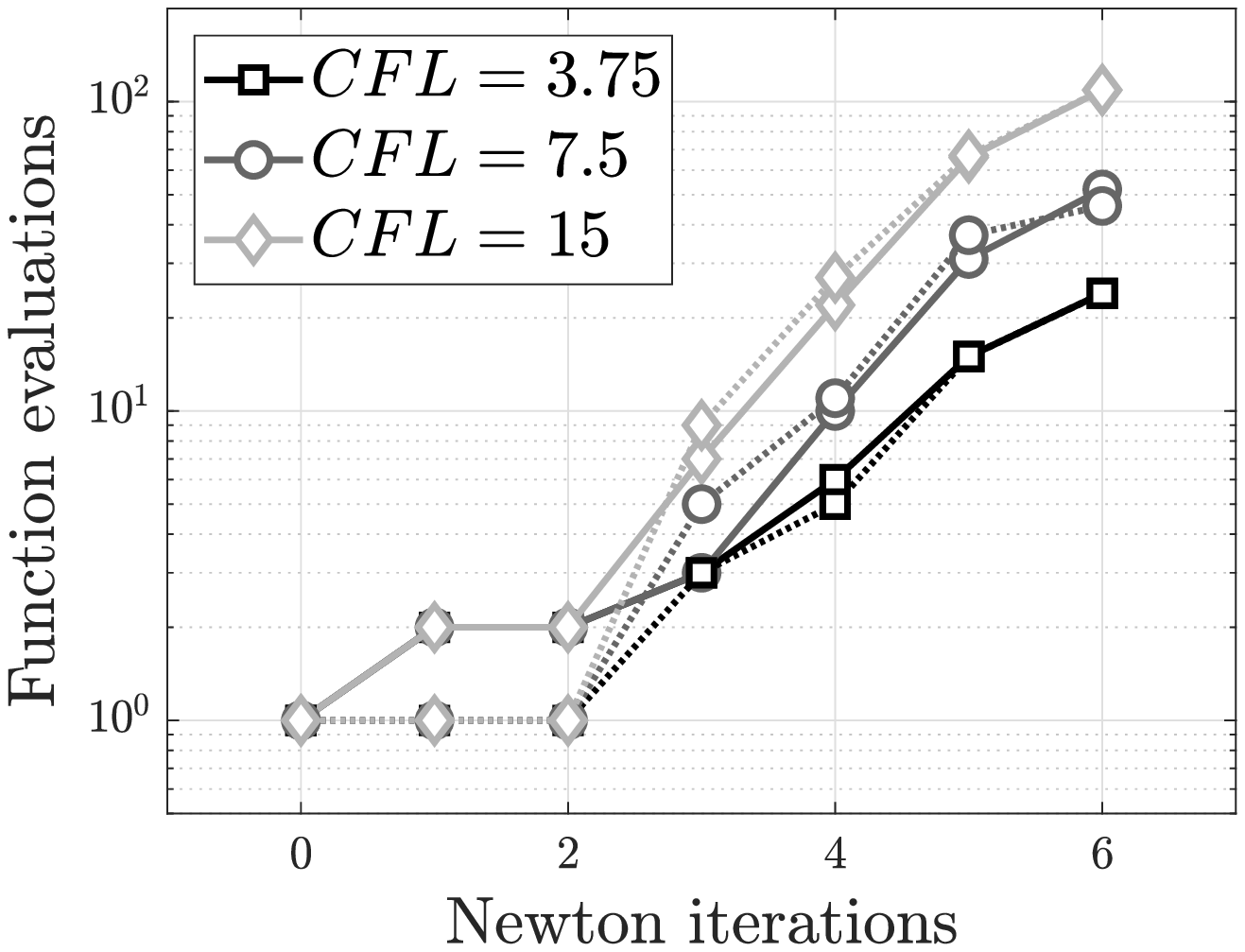}}
\caption{Efficiency study for Newton-GMRES with standard (solid lines) and flux consistent (dotted lines) initial guesses. (a) Function evaluations needed to reach a given residual. (b) Function evaluations per Newton iteration.}
\label{fig:residuals}
\end{figure}


\section{Conclusions} \label{sec:conclusions}

In this paper, the concepts of locally conservative and flux consistent iterative methods have been introduced and shown to be of both theoretical and practical interest. Based on earlier work in \cite{birken2021conservative}, it was shown that pseudo-time iterations using explicit Runge-Kutta methods are locally conservative but not necessarily flux consistent, when applied to conservative discretizations of finite volume-type with a broad class of implicit Runge-Kutta methods. For 1D problems, an extension of the Lax-Wendroff theorem reveals convergence towards weak solutions of a temporally retarded system of conservation laws. Each equation is modified in the same way, namely by a particular scalar factor multiplying the spatial flux terms. Flux consistency, and thereby convergence, is recovered through a technique based on using the explicit Euler method.

Local conservation has further been established for all Krylov subspace methods, with and without restarts, as well as for Newton's method under the assumption of bivariate fluxes. Thus it follows that Newton-Krylov methods are locally conservative, although not necessarily flux consistent. Numerical experiments with the 2D compressible Euler equations suggest that the role enforced flux consistency is case dependent. Its effect diminishes as the number of GMRES iterations grow, presumably because flux consistency is achieved automatically.


\bibliography{references.bib}


\end{document}